\newtheorem{thm}{Theorem}[section]
\newtheorem{cor}[thm]{Corollary}
\newtheorem{lem}[thm]{Lemma}
\newtheorem{prop}[thm]{Proposition}
\theoremstyle{remark}
\newtheorem{rem}[thm]{Remark}
\theoremstyle{definition}
\newtheorem{defn}[thm]{Definition}
\newtheorem{ex}[thm]{Example}
\newtheorem*{excont}{Example \ref{ex:pentagon} continued}
\numberwithin{equation}{section}
\title[Graph products]{Graph products of spheres, associative
  graded algebras and Hilbert series}
\date{\today}
\author{Peter Bubenik}
\address{Cleveland State University}
\email{p.bubenik@csuohio.edu}
\author{Leah H. Gold}
\address{Cleveland State University}
\email{l.gold33@csuohio.edu}
\thanks{}
\keywords{}
\subjclass[2000]{Primary: 16S15, 55P35 Secondary: 05C69, 16E30, 16E45, 16W50}
\newcommand{\bfk}{{\mathbf k}}
\newcommand{\freeL}{{\mathbb L}}
\newcommand{\ZZ}{{\mathbb{Z}}}
\newcommand{\Z} {\mathbb{Z}}
\newcommand{\N}{\mathbb{Z}_{>0}}
\newcommand{\ZM}{\ZZ\langle\langle M \rangle\rangle}
\newcommand{\Zz}{\ZZ\langle\langle z \rangle\rangle}
\newcommand{\Zmodtwo} {\mathbb{Z}/2\mathbb{Z}}
\newcommand{\from}{\leftarrow}
\newcommand{\AH}{\mathbf{AH}}
\providecommand{\abs}[1]{\lvert#1\rvert}
\providecommand{\isom}{\cong}
\providecommand{\incl}{\hookrightarrow}
\DeclareMathOperator{\initial}{in}
\DeclareMathOperator{\weight}{wgt}
\DeclareMathOperator{\Weight}{Wgt}
\DeclareMathOperator{\Tor}{Tor}
\DeclareMathOperator{\DGA}{DGA}
\begin{document}

\begin{abstract}
  Given a finite, simple, vertex--weighted graph, we construct a
  graded associative (non-commutative) algebra, whose generators
  correspond to vertices and whose ideal of relations has generators
  that are graded commutators corresponding to edges.  We show that
  the Hilbert series of this algebra is the inverse of the clique
  polynomial of the graph. Using this result it easy to recognize if
  the ideal is \emph{inert}, from which strong results on the algebra
  follow.  Noncommutative Gr\"obner bases play an important role in
  our proof.

  There is an interesting application to toric topology. This algebra
  arises naturally from a partial product of spheres, which is a
  special case of a generalized moment--angle complex. We apply our
  result to the loop--space homology of this space.
\end{abstract}

\maketitle
\section{Introduction}\label{section:introduction}
This paper connects ideas from algebra and algebraic topology and
tries to provide sufficient background to be accessible to both
audiences.

Let $\Gamma$ be a finite simple graph with vertices $V$ and edges $E$,
in which each vertex $i \in V$ is labeled with a positive integer
$p_i$, called the {\it weight} of the vertex $i$. For $j \in E$, let
$a_j, b_j$ denote its endpoints. Let $c_{i,j}$ be the number of
complete subgraphs of $\Gamma$ with $i$ vertices whose weights sum to
$j$. Call
\begin{equation*}
   c_{\Gamma}(z) =  \sum\limits_{i=0}^{\infty}\sum\limits_{j=0}^{\infty}
    (-1)^i c_{i,j} z^j
\end{equation*}
the \emph{clique polynomial} of the weighted graph $\Gamma$. It is a
polynomial because $\Gamma$ is finite.  Let $\bfk$ be a field with
characteristic not equal to $2$. Let $[a,b]$ denote the graded
commutator $ab - (-1)^{\abs{a}\abs{b}}ba$.  Our main result, which is
closely related to a similar result of Cartier and
Foata\cite{cartierFoata}, is the following.

\begin{thm} \label{thm:main}
The associative (noncommutative) graded algebra
 \[
 A(\Gamma) = {\bfk} \langle x_i \; , i \in V \rangle / I(\Gamma),
\]
where $I(\Gamma)$ is the two--sided ideal $([x_{a_j},x_{b_j}],\; j\in E)$,
   with $\abs{x_i} = p_i$  has Hilbert series
  \[
  H_{A(\Gamma)}(z) = \left[c_{\Gamma}(z)\right]^{-1}.
  \]
\end{thm}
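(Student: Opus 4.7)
My plan is to construct a $\bfk$-basis of $A(\Gamma)$ indexed by the commutation (trace) monoid $M(\Gamma)$ on $V$---in which two words are identified precisely when they differ by a finite sequence of edge-commutations---and then apply the Cartier--Foata inversion formula, which identifies $\sum_{\mu \in M(\Gamma)} z^{\weight(\mu)}$ with $c_\Gamma(z)^{-1}$. Noncommutative Gr\"obner basis techniques organize the reductions to normal forms.

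To produce the basis, I fix a total order $V = \{1 < 2 < \cdots < n\}$ and equip the free algebra $\bfk\langle x_1,\dots,x_n\rangle$ with the weight-compatible left-lexicographic monomial order. For each edge $\{a,b\}$ with $a<b$, the leading monomial of $[x_a,x_b]$ is $x_b x_a$, and the rewriting rule $x_b x_a \to (-1)^{p_a p_b} x_a x_b$ collapses every monomial, up to a sign, to the lex-minimal word in its commutation class; hence these lex-minimal representatives span $A(\Gamma)$. For linear independence I would construct a faithful representation of $A(\Gamma)$ on $\bfk M(\Gamma)$: each $x_i$ acts on a basis vector $e_\mu$ by $e_\mu \mapsto \varepsilon_i(\mu)\, e_{x_i\mu}$, where $\varepsilon_i(\mu) = \pm 1$ records the cumulative sign $\prod (-1)^{p_i p_j}$ accumulated as $x_i$ is moved to its canonical position in the lex-minimal representative of $x_i\mu$. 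Verifying the graded-commutator relations for this action---a sign computation that uses $\operatorname{char}(\bfk)\ne 2$---yields linear independence, because distinct traces produce distinct basis vectors when we act on $e_\emptyset$. Combining the basis with the Cartier--Foata inversion formula (provable by a sign-reversing involution on pairs (trace, clique) whose only fixed point is the empty pair) then gives $H_{A(\Gamma)}(z)=1/c_\Gamma(z)$.

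The main obstacle is the linear-independence step. A pure Bergman diamond-lemma argument based solely on the commutation relations is not confluent: for $i<j<k$ with $\{i,j\},\{j,k\}\in E$ but $\{i,k\}\notin E$, the overlap $x_kx_jx_i$ reduces to two distinct lex-reduced monomials $\pm x_jx_kx_i$ and $\pm x_kx_ix_j$, which nonetheless represent the same commutation class in $M(\Gamma)$. The correct basis must therefore pick exactly one representative per trace; the twisted representation above is what forces this identification and rules out extraneous linear relations. Keeping the graded signs consistent---in particular verifying that $\varepsilon_i$ descends to a well-defined sign on $M(\Gamma)$, and that triangle compositions $(-1)^{p_ip_j}(-1)^{p_jp_k}(-1)^{p_ip_k}$ agree along both reduction paths---is the main technical content.
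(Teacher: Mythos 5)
Your overall strategy---exhibiting a $\bfk$-basis of $A(\Gamma)$ indexed by the trace monoid $M(\Gamma)$ and then invoking Cartier--Foata---is sound and genuinely differs from the paper's. The paper first treats the ungraded case, where $M(\Gamma)$ is obviously a basis because the relations are honest commutations $x_ax_b-x_bx_a$ (so $A(\Gamma)$ is literally the monoid algebra of $M(\Gamma)$), and then reduces the graded case to it via Hilbert series: by Proposition~\ref{lemma:grobnerY} it suffices to produce a Gr\"obner basis for $I(\Gamma)$ whose set of leading monomials is independent of the signs appearing in the graded commutators, which the paper does by an induction along Mora's algorithm using a zig--zag invariant (Lemma~\ref{lemma:noZigZag}). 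You instead aim to identify $A(\Gamma)$ with the signed span of $M(\Gamma)$ directly, via a twisted $M(\Gamma)$-module.

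The gap is exactly at the step you label the ``main technical content'': the well-definedness of $\varepsilon_i(\mu)$. Since the rewriting system $x_bx_a\to(-1)^{p_ap_b}x_ax_b$ is not confluent, a word and the canonical representative of its trace can be joined by many different zig--zags of elementary commutations, and you need the accumulated sign to be the same along every one; otherwise the proposed action is ill-defined, and worse, some trace could satisfy $w=-w$ in $A(\Gamma)$ and collapse (here $\operatorname{char}\bfk\neq 2$ is used). Checking ``triangle compositions'' cannot settle this: the pentagon is triangle-free yet has an infinite Gr\"obner basis, so no bounded local overlap check will give coherence. You need a global invariant. The paper's Lemma~\ref{lemma:noZigZag} supplies one; a more explicit fix in your set-up is to define, for a word $u=x_{i_1}\cdots x_{i_r}$, the potential $\psi(u)=\sum_{k<l,\; i_k>i_l}p_{i_k}p_{i_l}\bmod 2$, check that each elementary commutation on an edge $\{a,b\}$ changes $\psi$ by exactly $p_ap_b\bmod 2$, and conclude that the accumulated sign along any zig--zag from $u$ to $v$ equals $(-1)^{\psi(u)+\psi(v)}$, hence is path-independent. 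Without this or an equivalent argument the linear-independence step does not close. (A related imprecision: because the system is not confluent, rewriting alone need not reach the lex-minimal word of a trace, only some lex-reduced word, so even your spanning claim as phrased already relies silently on the same sign coherence.)
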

We remark that even though $[x_{a_j},x_{b_j}]$ may depend on the
ordering of the endpoints of the edge $j\in E$, the ideal $I(\Gamma)$
does not.

From this theorem we obtain the following corollary. For $j\in E$, let
$q_j = p_{a_j} + p_{b_j}$.  For associative algebras, $A$ and $B$, we
write $A \amalg B$ for their free product.
Let $B(\Gamma)$ denote the subalgebra of the free associative algebra $\bfk \langle x_i, i \in V \rangle$ that is generated by $\{[x_{a_j},x_{b_j}], j \in E\}$.
\begin{cor} \label{cor:algebra}
  The following are equivalent.
  \begin{enumerate}
  \item The graph $\Gamma$ does not contain a triangle, i.e. a $3$--cycle.
  \item $ \bfk \langle x_1, \ldots, x_n \rangle \isom B(\Gamma) \amalg
    A(\Gamma) \text{ (as vector spaces),} $
  \item $ H_{A(\Gamma)}(z) = \dfrac{1}{1-(z^{p_1} + \cdots + z^{p_{\abs{V}}}) +
      (z^{q_1} + \cdots + z^{q_{\abs{E}}})}, $ and
  \item $A(\Gamma)$ has global dimension $\leq 2$.
  \end{enumerate}
\end{cor}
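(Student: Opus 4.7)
The plan is to derive (1) $\Leftrightarrow$ (3) as a direct combinatorial consequence of Theorem \ref{thm:main}, and then invoke Anick's characterization of strongly free (inert) sets of relations to conclude (2) $\Leftrightarrow$ (3) $\Leftrightarrow$ (4).

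For (1) $\Leftrightarrow$ (3): by Theorem \ref{thm:main}, condition (3) is equivalent to the identity
\[
c_\Gamma(z) = 1 - \sum_{i} z^{p_i} + \sum_{j} z^{q_j},
\]
i.e., to $\sum_{i \ge 3}(-1)^i c_{i,j} = 0$ for every $j$. The direction (1) $\Rightarrow$ (3) is immediate: a triangle-free $\Gamma$ contains no complete subgraph on $\ge 3$ vertices, so $c_{i,j} = 0$ for $i \ge 3$. For the converse, argue contrapositively: if $\Gamma$ has a triangle, let $w$ be the minimum of $p_a + p_b + p_c$ over triangles $\{a,b,c\}$. Since $p_v \ge 1$ for all $v$, removing any vertex from a $k$-clique with $k \ge 4$ and total weight $w$ would yield a triangle of total weight $w - p_v < w$, contradicting the minimality of $w$. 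Hence $c_{i,w} = 0$ for all $i \ge 4$ while $c_{3,w} \ge 1$, giving $\sum_{i \ge 3}(-1)^i c_{i,w} = -c_{3,w} < 0$, which violates (3).

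For the equivalence of (2), (3), (4): I would invoke the Anick-style theorem that for any homogeneous subset $R$ of a free graded algebra $F$, the following are equivalent: (a) the subalgebra $\langle R\rangle \subset F$ is free on $R$ and $F \isom \langle R\rangle \amalg F/(R)$ as graded vector spaces; (b) $H_{F/(R)}^{-1}(z) = H_F^{-1}(z) + \sum_{r \in R} z^{\abs{r}}$; (c) $F/(R)$ has global dimension at most $2$. When $R = \{[x_{a_j}, x_{b_j}] : j \in E\}$ and $F = \bfk\langle x_i\rangle$, these translate directly into (2), (3), (4) respectively, using Theorem \ref{thm:main} and the identification $B(\Gamma) = \langle R\rangle$. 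For (2) $\Leftrightarrow$ (a), compare Hilbert series via the free-product formula $H_{X \amalg Y}^{-1} = H_X^{-1} + H_Y^{-1} - 1$: the vector-space isomorphism forces $H_{B(\Gamma)}^{-1} = 1 - \sum_j z^{q_j}$, and the natural surjection from the free algebra on symbols $y_j$ of degree $q_j$ onto $B(\Gamma)$ then agrees in each graded piece, hence is an isomorphism.

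The main obstacle is the substantive direction (3) $\Rightarrow$ (2) and (3) $\Rightarrow$ (4), which is the content of Anick's inertness theorem; I would cite this result rather than reprove it. The reverse directions reduce to Hilbert-series bookkeeping, and the unpacking of (a)--(c) into the forms (2)--(4) is routine once one invokes Theorem \ref{thm:main} to rewrite $H_{A(\Gamma)}^{-1}$ as $c_\Gamma(z)$.
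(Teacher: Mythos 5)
Your argument for the equivalence $(1)\Leftrightarrow(3)$ is essentially identical to the paper's: both reduce to the observation that $c_\Gamma(z) = 1 - \sum_i z^{p_i} + \sum_j z^{q_j}$ iff $\sum_{i\ge 3}(-1)^i c_{i,j}=0$ for every $j$, and both rule out a triangle by looking at the minimal weight $j_0$ of a triangle, noting that positivity of the vertex weights forces $c_{i,j_0}=0$ for $i\ge 4$ (remove a vertex to get a lighter triangle) so the alternating sum reduces to $-c_{3,j_0}\ne 0$. No issues there.

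The gap is in your treatment of $(2)\Leftrightarrow(\text{a})$. Anick's ``strongly free'' criterion is stated in terms of the abstract free algebra on the set of relators, whereas condition $(2)$ of the corollary is stated in terms of the \emph{subalgebra} $B(\Gamma)\subset \bfk\langle x_i\rangle$ generated by the commutators. To apply Anick one therefore needs to know that the natural surjection $\bfk\langle y_j : j\in E\rangle \twoheadrightarrow B(\Gamma)$ is an isomorphism. You try to extract this from Hilbert-series bookkeeping, asserting that the vector-space decomposition $(2)$ alone forces $H_{B(\Gamma)}^{-1}=1-\sum_j z^{q_j}$. That is not true: combined with Theorem~\ref{thm:main} and the free-product formula, $(2)$ only gives
\[
H_{B(\Gamma)}^{-1} \;=\; 2 - \sum_i z^{p_i} - c_\Gamma(z) \;=\; 1 - \sum_j z^{q_j} + \sum_{i\ge 3}\sum_j (-1)^{i+1}c_{i,j}z^j,
\]
which equals $1-\sum_j z^{q_j}$ precisely when $(3)$ holds — the very thing being proved. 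So the claimed implication $(2)\Rightarrow(\text{a})$ is circular as written. The paper instead proves, unconditionally, that $B(\Gamma)$ is free on the commutators (Lemma~\ref{lemma:bijection}), via the fact that a Lie subalgebra of a free graded Lie algebra is free and that $U$ takes free graded Lie algebras to free associative algebras. You should either invoke that lemma (or reprove it), or reformulate Anick's criterion so that condition $(2)$ can be matched to it without presupposing freeness of $B(\Gamma)$. With Lemma~\ref{lemma:bijection} in hand, the rest of your reduction to Anick's theorem is fine and agrees with the paper's proof of Theorem~\ref{thm:anick}.
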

If these equivalent conditions are satisfied, we say that the
two--sided ideal, $I(\Gamma)$, is \emph{inert}.
Equivalently, we say that the set $\{[x_{a_j},x_{b_j}], \; j \in E\}$ is inert, or that $A(\Gamma)$ has a finite inert presentation.

\bigskip

Inert sets play a central role for non--commutative algebra analogous
to regular sequences in commutative ring theory. This analogy is made
precise by Anick\footnote{In \cite{anick:inert}, Anick uses ``strongly
  free'' in place of ``inert''.}~\cite{anick:inert}.  In general, most
problems for finitely presented associative algebras are
unsolvable. For example, the word problem is
undecidable~\cite{Tsejtin}.  However, if the set of relations is
inert, then the situation is as simple as it can be, and quite a lot
can be said. Unfortunately, it can be very hard to determine whether
or not a given ideal is inert. Anick~\cite{anick:inert} gives some
sufficient conditions, but they do not apply to most of the examples
considered here. A simple non--trivial example is $A(\Gamma)$, where
$\Gamma$ is the pentagon.

This paper gives a large class of associative algebras, $A(\Gamma)$, with finite presentations, for which we can easily check inertness. We analyze the case where $\Gamma$ is the pentagon in detail. Another example, which is easily seen to be inert from its clique polynomial, is $A(\Gamma)$, where $\Gamma$ is the one--skeleton of the dodecahedron. The authors know of no other way to ascertain this fact. 
Furthermore, we obtain a large class of finitely presented algebras whose presentations are not inert, but for which we can nevertheless easily calculate their Hilbert series.
In addition, we show that these algebraic results have topological versions.

\bigskip

The algebra $A(\Gamma)$ also arises from a ``graph product'' of
spheres. This is an example of a generalized moment--angle complex,
which we now describe.

Let $K$ be a simplicial complex with vertices $\{1, \ldots, n\}$.  Let
$\underline{X}$ be the collection
\begin{equation*}
  \underline{X} = \left\{ (X_i, A_i) \right\}_{i=1}^n,
\end{equation*}
where $A_i \subseteq X_i$ is a pair of topological spaces.

For each face $\sigma \in K$, define
\begin{equation*}
  X^{\sigma} = \Bigl\{ (x_1, \ldots, x_n) \in \prod_{i=1}^n X_i
  \ \mid \ x_i \in A_i \text{ if } i \notin \sigma \Bigr\} = \prod_{i=1}^n Y_i, 
\end{equation*}
\begin{equation*}
\text{ where }Y_i =
\begin{cases}
  X_i & \text{if } i \in \sigma\\
  A_i & \text{if } i \notin \sigma .
\end{cases}
\end{equation*}
Then the \emph{generalized moment--angle complex} is given by
\begin{equation*}
  \underline{X}^K = \cup_{\sigma \in K} X^{\sigma}.
\end{equation*}

In the case in which $(X_i,A_i) = (D^2,S^1)$ for all $i$, one obtains
the moment--angle complex defined by Davis and Januszkiewicz
\cite{davisJanuszkiewicz}, and studied in detail by Buchstaber and
Panov~\cite{bukhshtaberPanov:actionsOfTori}, Notbohm and
Ray~\cite{notbohmRay} and Grbi{\'c} and
Theriault~\cite{grbicTheriault}. Davis and Januszkiewicz showed that
every smooth projective toric variety is the quotient of a
moment--angle complex by the free action of a real torus.  The above
generalization was defined by Strickland~\cite{strickland:toricSpaces}
and has been studied by Denham and
Suciu~\cite{denhamSuciu:momentAngleComplexes}, Bahri, Bendersky, Cohen
and Gitler~\cite{bbcg:polyhedralProduct} and F\'elix and
Tanr\'e~\cite{felixTanre:polyhedralProduct}.  This construction is
also called a \emph{partial product}, or a \emph{polyhedral
  product}. The notation $Z(K,\underline{X})$ is also used for
$\underline{X}^K$.

We are interested in the case where $\underline{X}$ is a collection of
pointed spheres:
\begin{equation*}
  \underline{X} = \left\{ (S^{p_i+1},*) \right\}_{i=1}^n,  
\end{equation*}
where $*$ is the one--point space with fixed inclusion $* \incl
S^{p_i+1}$, and $K$ is one-dimensional. That is, $K$ is a simple graph
$\Gamma$ with $n$ vertices.

Let $E=\{1,\ldots,m\}$ be the set of edges of $\Gamma$.  For $j \in
E$, let $a_j,b_j$ be the vertices that $j$ connects.  Let $q_j =
p_{a_j} + p_{b_j}$.  Let $\alpha_j: S^{q_j+1} \to S^{p_{a_j}+1} \vee
S^{p_{b_j}+1}$ denote the top cell attachment of $S^{p_{a_j}+1} \times
S^{p_{b_j}+1}$ (given by the Whitehead product $[\iota_{a_j},
\iota_{b_j}]_W$ where $\iota_i$ is the identity map on $S^{p_i+1}$).
Then $\underline{X}^{\Gamma}$ is given by adjoining $m$ cells to a wedge of
$n$ spheres:
\begin{equation*}
  \underline{X}^{\Gamma} \isom \left( \vee_{i=1}^{n} S^{p_i+1} \right) \cup_f 
  \left( \vee_{j=1}^m D^{q_j+2} \right), \text{ where } f = \vee_{j=1}^m \alpha_j.
\end{equation*}
A space with such a construction is called a spherical two--cone.
Understanding spaces constructed using such attaching maps is a
nontrivial problem first studied by
J.H.C. Whitehead~\cite{whitehead:addingRelations}. More recent work
includes that by Halperin and Lemaire
\cite{lemaire:autopsie,halperinLemaire:inert},
Anick~\cite{anick:thesis}, F\'elix and
Thomas~\cite{felixThomas:attach}, and
Bubenik~\cite{bubenik:freeSemiInert,bubenik:separated}.

Let $Y = \underline{X}^{\Gamma}$, where $\underline{X} = \left\{
  (S^{p_1+1},*) \right\}_{i=1}^n$. Let $W = \underline{X}^{\Gamma_0} =
\vee_{i=1}^n S^{p_i+1}$ and let $i:W \hookrightarrow Y$ denote the
inclusion. Let $Z = \vee_{j=1}^m S^{q_j+1}$. So $Y = W \cup_f CZ$,
where $f = \vee_{j=1}^m \alpha_j: Z \to W$ denotes the attaching map
and $CZ$ denotes the cone on $Z$. Let $B(f)$ denote the image of
$H_*(\Omega f; \bfk): H_*(\Omega Z; \bfk) \to H_*(\Omega W;
\bfk)$. Let $I(f)$ denote the 2--sided ideal generated by $B(f)$. Let
$A(f) = H_*(\Omega W; \bfk) / I(f)$; that is, we have $A(f) \isom \bfk
\langle x_i \; , i \in V \rangle/([x_{a_j},x_{b_j}]\; , j \in E)$.

\begin{thm} \label{thm:topology}
The following are equivalent:
  \begin{enumerate}
  \item the graph $\Gamma$ does not contain a $3$--cycle,
  \item $H_*(\Omega i; \bfk): H_*(\Omega W; \bfk) \to H_*(\Omega Y;
    \bfk)$ is a surjection,
  \item $H_*(\Omega Y; \bfk) \isom A(f)$ (as vector spaces),
  \item $H_*(\Omega W; \bfk) \isom B(f) \amalg A(f)$,
  \item $A(f)$ has global dimension $\leq 2$, and
  \item $H_{A(f)}(z) = \dfrac{1}{1-(z^{p_1} + \cdots + z^{p_n}) +
      (z^{q_1} + \cdots + z^{q_m})}$.
  \end{enumerate}
\end{thm}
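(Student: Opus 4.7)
The plan is to reduce Theorem \ref{thm:topology} to Corollary \ref{cor:algebra} by identifying the topologically defined algebra $A(f)$ with the combinatorially defined algebra $A(\Gamma)$, and then to cite the standard theory of inert attaching maps to take care of the purely topological equivalences.

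First I would identify the algebras. By the Bott--Samelson theorem, $H_*(\Omega W;\bfk) \isom \bfk\langle x_i \mid i\in V\rangle$ is free associative with $\abs{x_i}=p_i$. Under this identification, the classical Samelson--Whitehead formula sends the image of the top-cell attaching map $\alpha_j = [\iota_{a_j},\iota_{b_j}]_W$ under $H_*(\Omega(-);\bfk)$ to the graded commutator $[x_{a_j},x_{b_j}]$. Hence $B(f)$ coincides with the subalgebra $B(\Gamma)$ generated by these commutators, $I(f)=I(\Gamma)$, and $A(f)=A(\Gamma)$. Under this dictionary, conditions (4), (5), (6) of Theorem \ref{thm:topology} translate into conditions (2), (4), (3) of Corollary \ref{cor:algebra} respectively, so the corollary already supplies the chain (1) $\Leftrightarrow$ (4) $\Leftrightarrow$ (5) $\Leftrightarrow$ (6).

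It remains to close the loop with (2) and (3). The cofibration $Z \xrightarrow{f} W \xrightarrow{i} Y$ makes $\Omega i \circ \Omega f$ null homotopic, so $H_*(\Omega i;\bfk)$ factors through a canonical surjection $A(f) \twoheadrightarrow H_*(\Omega Y;\bfk)$. In the terminology of Halperin--Lemaire, $f$ is \emph{inert} when $H_*(\Omega i;\bfk)$ is surjective---condition (2)---and this is known to be equivalent to that canonical map being an isomorphism (condition (3)) and to the free-product splitting of $H_*(\Omega W;\bfk)$ (condition (4)). I would cite \cite{halperinLemaire:inert, lemaire:autopsie, anick:thesis, felixThomas:attach, bubenik:freeSemiInert} for the precise formulations; each is proved via an Adams--Hilton model comparison.

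The main obstacle I anticipate is reconciling the paper's algebraic notion of ``inert'' (as used in Corollary \ref{cor:algebra}) with the homotopical notion of an inert attaching map. Both notions are characterized by the Hilbert series of the loop algebra attaining the bound $1/(1-\sum z^{p_i} + \sum z^{q_j})$: Corollary \ref{cor:algebra} produces this bound on the algebraic side, while the Adams--Hilton model provides the matching upper bound for the Hilbert series of $H_*(\Omega Y;\bfk)$. Once this coincidence of definitions is recorded, the topological equivalences (2) $\Leftrightarrow$ (3) $\Leftrightarrow$ (4) follow from the cited theory, and combining with the algebraic chain yields all six equivalences of Theorem \ref{thm:topology}.
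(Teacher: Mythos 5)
Your route coincides with the paper's at the top level---translate the topological data into the combinatorial algebra $A(\Gamma)$, quote the algebraic theorems for most of the six conditions, and appeal to the theory of inert cell attachments for the remaining topological ones---but the decomposition into sub-equivalences is different. The paper builds the dictionary with the Adams--Hilton model, $\AH(\underline{X}^\Gamma)=\DGA(\Gamma)$; once $H_*(\Omega Y;\bfk)\isom H\,\DGA(\Gamma)$ is in hand, condition (3) becomes \emph{literally} condition (1) of Theorem~\ref{thm:anick}, so (3) $\Leftrightarrow$ (4) $\Leftrightarrow$ (5) $\Leftrightarrow$ (6) fall out of Anick's algebraic theorem in one stroke, (1) $\Leftrightarrow$ (6) is Theorem~\ref{thm:main}, and the single topological citation [F\'elix--Thomas, Theorem~1] is needed only to bring in (2). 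You instead get (1) $\Leftrightarrow$ (4) $\Leftrightarrow$ (5) $\Leftrightarrow$ (6) from Corollary~\ref{cor:algebra}, and push (2) $\Leftrightarrow$ (3) $\Leftrightarrow$ (4) into the inert-attachment literature wholesale. That works, but it leans more heavily on the topology than necessary, and you must then be careful that the results you invoke hold over an arbitrary field $\bfk$: Halperin--Lemaire is rational and Anick's thesis carries degree and characteristic restrictions, whereas F\'elix--Thomas is stated over general coefficients---this is exactly why the paper's proof singles out \cite{felixThomas:attach}. The Bott--Samelson plus Samelson--Whitehead identification you use in place of the Adams--Hilton model is fine and gives the same $A(f)=A(\Gamma)$.

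One genuine slip to correct: you write that $H_*(\Omega i;\bfk)$ ``factors through a canonical surjection $A(f)\twoheadrightarrow H_*(\Omega Y;\bfk)$.'' The factorization through $A(f)$ is correct, since $H_*(\Omega i)\circ H_*(\Omega f)$ is trivial and hence $H_*(\Omega i)$ kills $I(f)$; but the induced map $A(f)\to H_*(\Omega Y;\bfk)$ is \emph{not} a surjection in general---if it were, $H_*(\Omega i)$ would always be surjective, condition (2) would hold unconditionally, and the theorem would be vacuous. (The Hopf attachment $S^2\cup_\eta e^4=\mathbb{CP}^2$ already gives a non-surjective $H_*(\Omega i)$.) What is true is that its surjectivity is exactly condition (2) and its being an isomorphism is exactly condition (3); say ``canonical map,'' not ``canonical surjection,'' and the argument is sound.
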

If these equivalent conditions are satisfied we say that the attaching
map $f$ is \emph{inert}. From these it follows that,
\begin{enumerate}
\item[(4')] $H_*(\Omega W; \bfk) \isom B(f) \amalg H_*(\Omega Y; \bfk)$ (as vector spaces),
\item[(5')] $H_*(\Omega Y; \bfk)$ has global dimension $\leq 2$, and
\item[(6')] $H_*(\Omega Y; \bfk)$ has Hilbert series
  $\frac{1}{1-(z^{p_1} + \cdots + z^{p_n}) + (z^{q_1} + \cdots +
    z^{q_m})}$.
\end{enumerate}

\bigskip

We have constructed two isomorphic algebras, $A(\Gamma)$ and
$A(f)$. Henceforth we denote them by $A$.
Anick~\cite{anick:homolAssocAlg} constructed a free graded
$A$-resolution of ${\bfk}$. It follows that homology groups
$\Tor_p^A({\bfk},{\bfk})$ are also graded.  Let
$\Tor_{p,q}^A({\bfk},{\bfk})$ denote the component of
$\Tor_p^A({\bfk},{\bfk})$ in grading $q$.  Since $A$ is finitely
generated, it is evident from Anick's resolution that
$\Tor_{p,q}^A({\bfk},{\bfk})$ is finite dimensional.  Thus one can
define the \emph{double Poincar\'e series} of $A$,
\begin{equation*}
  P_A(y,z) = \sum_{p,q \geq 0} \dim\left(\Tor_{p,q}^A({\bfk},{\bfk})\right) y^p z^q.
\end{equation*}
Anick~\cite[Equation (13)]{anick:homolAssocAlg} uses $\Tor_p^A({\bfk},{\bfk})$ to construct a minimal free $A$-resolution of ${\bfk}$, and calculates that
\begin{equation*}
  H_A(z) = \left[ P_A(-1,z) \right]^{-1}.
\end{equation*}
We thus have the following corollary.
\begin{cor}
$P_A(-1,z) = c_{\Gamma}(z)$.
\end{cor}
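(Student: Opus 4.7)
The plan is to derive this identity directly by comparing two expressions for the Hilbert series $H_A(z)$. The first expression is Anick's formula, already stated in the excerpt: $H_A(z) = [P_A(-1,z)]^{-1}$. The second comes from Theorem~\ref{thm:main} applied to $A = A(\Gamma)$, giving $H_A(z) = [c_{\Gamma}(z)]^{-1}$.

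First I would combine these two identities to obtain
\[
[P_A(-1,z)]^{-1} = [c_{\Gamma}(z)]^{-1}
\]
as formal power series in $\ZZ[[z]]$ (or over $\Q$, whichever ambient ring one prefers). Then I would note that both sides are genuinely invertible power series: on one hand, $c_{\Gamma}(0) = 1$ since the only clique of weight $0$ is the empty clique, contributing $(-1)^0 \cdot 1 \cdot z^0 = 1$; on the other hand, $P_A(-1,0) = \dim_{\bfk} \Tor_{0,0}^A({\bfk},{\bfk}) = \dim_{\bfk} {\bfk} = 1$. Hence both $c_{\Gamma}(z)$ and $P_A(-1,z)$ lie in $1 + z \cdot \ZZ[[z]]$ and are units.

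Finally I would invert both sides of the displayed equation to conclude
\[
P_A(-1,z) = c_{\Gamma}(z),
\]
which is the desired identity.

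I do not anticipate any genuine obstacle here: the corollary is essentially a bookkeeping combination of Theorem~\ref{thm:main} with Anick's formula $H_A(z) = [P_A(-1,z)]^{-1}$, and the only point requiring any care is the trivial verification that the relevant power series have constant term $1$ so that the inversion is unambiguous.
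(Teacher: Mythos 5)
Your proposal is correct and matches the paper's reasoning: the corollary follows immediately by combining Anick's identity $H_A(z) = [P_A(-1,z)]^{-1}$ with Theorem~\ref{thm:main}, and the paper gives no further argument beyond this juxtaposition. Your extra verification that both series have constant term $1$ is a harmless and reasonable bit of care.
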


Since this paper contains results that may of interest to both
algebraists and topologists, we provide sufficient background for both
audiences.  We provide some background in
Section~\ref{section:background}, and show that
Corollary~\ref{cor:algebra} follows from Theorem~\ref{thm:main} using
results of Anick's on inert sets.  Our proof of Theorem~\ref{thm:main}
uses the theory of noncommutative Gr\"obner bases, which we introduce
in Section~\ref{section:grobner}, and a classical result of the theory
of free partially commutative monoids~\cite{cartierFoata}, which we
recall in Section~\ref{section:trace}. In Section~\ref{section:proof}
we prove a generalization of Theorem~\ref{thm:main}, in which we
assign weights to both the vertices and the edges of the graph. In
Section~\ref{section:topology} we apply our results to graph products
of spheres using Adams--Hilton models, obtaining
Theorem~\ref{thm:topology}.

\section{Background} \label{section:background}

\subsection{Monoids and formal power series}

Let $\langle x_1, \ldots, x_n \rangle$ denote the monoid generated by
$\{x_1, x_2, \ldots , x_n\}$ with the empty word 1 as unit.

\begin{defn} \label{def:fps} A \emph{formal power series} of a monoid
  $M$ is an element of the ring, $\ZM$, of functions (of sets) $M \to
  \Z$. A formal power series, $f$, is often denoted by $\sum_{w \in M}
  f(w) w$. 
\end{defn}

For example, a formal power series on $\langle z \rangle$, $f(k)=a_k$,
is denoted $\sum_{k=0}^{\infty} a_k z^k$.  The set $\ZM$ has the
structure of a ring under the usual addition and the non-commutative
Cauchy product.
\begin{gather*}
  (f+g)(w) = f(w) + g(w)\\
  (f \cdot g)(w) = \sum_{uv = w} f(u) g(v)
\end{gather*}
It has unit given by ${\bf1}(1) = 1$ and ${\bf1}(w) = 0$ if $w$ is nonempty. A
formal power series $f \in \ZM$ is invertible if and only if $f(1)$ is
invertible in $\Z$.  If $M = \langle x_1, \ldots, x_n \rangle$, we
will write $\ZM = \Z \langle \langle x_1, \ldots, x_n \rangle
\rangle$.

\subsection{Algebras and Hilbert series}

In this paper we will always assume $\bfk$ is a field with
characteristic not equal to $2$. Let $A$ be a (non-negatively) graded
associative $\bfk$-algebra. That is, $A = \oplus_{n=0}^{\infty} A_n$,
and for $x \in A_n$, $y \in A_m$, $xy \in A_{n+m}$. For $x \in A_n$,
we write $\abs{x} = n$.  We will always assume that our algebras are
non-negatively graded associative $\bfk$-algebras. Ideals will always
be assumed to be two-sided.

\begin{defn} \label{def:hilbertSeries} The {\it Hilbert series} of $A$
  is the formal power series in $\Zz$ given by
  $H_A(z)=\sum_{i=0}^\infty \dim_{\bfk} A_n \, z^n$. If $\dim (A_0) = 1$,
  then $H_A(z)$ is invertible, and we let $\left[H_A(z)\right]^{-1}$
  and $\frac{1}{H_A(z)}$ denote the inverse of $H_A(z)$.
\end{defn}

\begin{defn}
  For $x,y \in A$, the \emph{graded commutator} is given by $[x,y] =
  xy - (-1)^{\abs{x}\abs{y}}yx$.
\end{defn}

\subsection{Graphs and cliques}

Let $\Gamma$ be a simple graph (i.e. no double edges or loops) with a
finite set $V$ of vertices labeled in $\N$ and a set $E$ of edges. For
each vertex $i$ call its label $p_i$ its \emph{weight}. All graphs we
consider are assumed to be finite simple graphs whose vertices are
labeled in $\N$.

\begin{defn}
  An \emph{$i$--clique of weight $j$} of $\Gamma$ is a complete
  subgraph of $\Gamma$ on $i$ vertices whose weights sum to $j$. For
  $i,j \geq 0$ let $c_{i,j}(\Gamma)$ be the number of $i$--cliques of
  weight $j$ in $\Gamma$.
\end{defn}
So for all graphs $c_{0,0}(\Gamma) = 1$, $\sum_{j}c_{1,j}(\Gamma) =
\abs{V}$, $\sum_jc_{2,j}(\Gamma) = \abs{E}$ and for $i>j$,
$c_{i,j}(\Gamma)=0$. Also for $K_n$, the complete graph on $n$ vertices,
$\sum_j c_{i,j}(K_n) = \binom{n}{i}$. We make the following definition.

\begin{defn} \label{def:cliquePolynomial}
  The \emph{clique polynomial} of $\Gamma$ is given by 
  \begin{equation*}
    c_\Gamma(z) = \sum_{j=0}^{\infty}\sum_{i=0}^{\infty} (-1)^i c_{i,j}(\Gamma) z^j.
  \end{equation*}
\end{defn}
If for all $i \in V$, $p_i=1$, then $c_{i,j}(\Gamma) = 0$ unless
$i=j$.  So $c_\Gamma(z) = \sum_{i=0}^{\infty} (-1)^i c_{i,i}(\Gamma)
z^i$ in this case.

\subsection{Algebras from graphs} \label{section:algFromGraphs}

\begin{defn} \label{def:A(G)} Given a finite simple weighted graph,
  $\Gamma$ as above, let $\bfk \langle x_1, \ldots, x_{\abs{V}}
  \rangle$ be the associative graded algebra with the degree of $x_i$,
  denoted $\abs{x_i}$, equal to the weight $p_i$. For $j \in E$, let
  $\{a_j,b_j\}$ denote the boundary of $j$.  Let $B(\Gamma)$ denote
  the subalgebra of $\bfk \langle x_1, \ldots, x_{\abs{V}} \rangle$
  generated by $\{[x_{a_j},x_{b_j}], j \in E\}$. Let $I(\Gamma)$ be
  the two-sided ideal generated by $B(\Gamma)$. That is,
\begin{equation*}
I(\Gamma)= (\{[x_{a_j},x_{b_j}] \mid j \in E\}).
\end{equation*}
Define the graded algebra associated to the graph $\Gamma$ to be
  \[
  A(\Gamma) = \bfk \langle x_1, \ldots, x_{\abs{V}} \rangle / I(\Gamma).
  \]
\end{defn}

\begin{rem}
  This can be thought of as a graded version of the graph algebras of
  \cite{km-lnr:graphAlgebras,kimRoush}. We avoid the convenient
  terminology ``graph algebras'' since it has other common usages.
\end{rem}

Define the differential graded algebra associated to $\Gamma$ to be
\begin{equation*}
  \DGA(\Gamma) = 
  \bigl( \bfk \langle x_1, \ldots, x_{\abs{V}}, y_1, \ldots, y_{\abs{E}} \rangle,
  dx_i=0, dy_j = [x_{a_j},x_{b_j}] \bigr),
\end{equation*}
where $\abs{x_i}=p_i$ and $\abs{y_j}=p_{a_j}+p_{b_j}+1$. The differential
reduces the degree by one.

\begin{ex}\label{ex:pentagon}
  Let $\Gamma$ be a pentagon, that is the $5$--cycle graph together
  with weights on its vertices. Then
  \begin{gather*}
    A(\Gamma) = \bfk \langle x_1, x_2,x_3,x_4, x_5 \rangle /
    ([x_1,x_2]
    , [x_2,x_3], [x_3,x_4], [x_4,x_5], [x_5,x_1]), \text{ and}\\
    \DGA(\Gamma) = \bigl( \bfk \langle x_1, \ldots, x_5, y_1, \ldots,
    y_5 \rangle, dx_i=0, dy_j = [x_j,x_{j+1}] \bigr),
  \end{gather*}
where $x_6=x_1$.
\end{ex}

\subsection{Inert ideals}

\begin{lem} \label{lemma:bijection}
  The surjection $\phi: \bfk \langle [x_{a_j},x_{b_j}], \; j \in E \rangle
  \to B(\Gamma)$ from the free associative algebra is a bijection.
\end{lem}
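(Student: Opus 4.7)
The plan is to establish injectivity, since surjectivity holds by construction, via a distinguished monomial argument. First I would fix an orientation of each edge $j \in E$, so that the endpoints of $j$ are an ordered pair $(a_j, b_j)$. A basis element of the domain $\bfk\langle [x_{a_j}, x_{b_j}], j \in E\rangle$ corresponds to a finite sequence of edges $\underline{j} = (j_1, \ldots, j_k) \in E^k$, with image $P_{\underline{j}} = [x_{a_{j_1}}, x_{b_{j_1}}] \cdots [x_{a_{j_k}}, x_{b_{j_k}}]$ in $\bfk\langle x_1, \ldots, x_{\abs{V}}\rangle$. Expanding each commutator as $[x_{a_j}, x_{b_j}] = x_{a_j} x_{b_j} - (-1)^{p_{a_j} p_{b_j}} x_{b_j} x_{a_j}$ and distributing yields $2^k$ signed monomials of string length $2k$, each obtained by choosing, for every $s$, either $(x_{a_{j_s}}, x_{b_{j_s}})$ or the reversed pair $(x_{b_{j_s}}, x_{a_{j_s}})$ at positions $2s-1, 2s$.

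To each $\underline{j}$ I would associate the distinguished monomial $m_{\underline{j}} = x_{a_{j_1}} x_{b_{j_1}} \cdots x_{a_{j_k}} x_{b_{j_k}}$, arising from the all-zero choice. In $P_{\underline{j}}$ this monomial appears with coefficient $+1$, and no other choice of pair-reversals produces it, because $\Gamma$ has no loops and so $(a_{j_s}, b_{j_s}) \ne (b_{j_s}, a_{j_s})$ for every $s$. The key claim is that $m_{\underline{j}}$ does not appear in the expansion of $P_{\underline{j}'}$ for any other sequence $\underline{j}' = (j_1', \ldots, j_l')$: matching string lengths forces $l = k$; then splitting $m_{\underline{j}}$ into $k$ consecutive pairs of length $2$, the unordered content of the $s$-th pair is $\{a_{j_s}, b_{j_s}\}$, which must coincide with $\{a_{j_s'}, b_{j_s'}\}$, so by the simplicity hypothesis on $\Gamma$ this forces $j_s = j_s'$ for every $s$, contradicting $\underline{j}' \ne \underline{j}$.

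Given any linear relation $\sum c_{\underline{j}} P_{\underline{j}} = 0$ with some $c_{\underline{j}^*} \ne 0$, the coefficient of $m_{\underline{j}^*}$ on the left is then exactly $c_{\underline{j}^*}$, a contradiction; this forces all $c_{\underline{j}} = 0$, so $\phi$ is injective. The main obstacle is just verifying the key claim cleanly, where the essential input is the simple-graph hypothesis, which ensures that an unordered pair of distinct vertices determines at most one edge; loops and multi-edges would both invalidate the pair-matching step.
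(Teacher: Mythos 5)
Your proof is correct, and it takes a genuinely different route from the paper's. The paper argues via Lie theory: it considers the Lie subalgebra $L(\Gamma)$ of the free graded Lie algebra generated by the commutators $[x_{a_j},x_{b_j}]$, invokes the structure theorem that (over a field of characteristic $\neq 2$) every Lie subalgebra of a free graded Lie algebra is free, concludes the Lie-level surjection is an isomorphism, and then applies the universal enveloping algebra functor $U$ to obtain $\phi \cong U(\theta)$. You instead give a direct combinatorial linear-independence argument: you expand each product $P_{\underline{j}}$ of $k$ commutators into $2^k$ signed monomials of string length $2k$, single out the distinguished monomial $m_{\underline{j}} = x_{a_{j_1}}x_{b_{j_1}}\cdots x_{a_{j_k}}x_{b_{j_k}}$ carrying coefficient $+1$, and show that the no-loops hypothesis prevents internal coincidence and the simple-graph hypothesis (an unordered pair of distinct vertices determines at most one edge) prevents cross-contamination from any other $P_{\underline{j}'}$. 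Extracting the coefficient of $m_{\underline{j}^*}$ from a putative dependence then kills all coefficients. Your argument is self-contained and elementary, avoiding the nontrivial freeness theorem for Lie subalgebras; as a minor bonus it does not actually need $\mathrm{char}\,\bfk \neq 2$ (the two terms of each commutator are distinct monomials with coefficients $\pm 1$ in any characteristic), whereas the paper's Lie-theoretic route does. The paper's proof is shorter modulo the cited machinery and situates the lemma conceptually within the graded Lie framework that drives the rest of the topology in the paper.
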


\begin{proof}
  Let $L(\Gamma)$ denote the Lie subalgebra of the free Lie algebra $\freeL
  \langle x_1, \ldots, x_{\abs{V}} \rangle$ generated by
  $\{[x_{a_j},x_{b_j}], \; j \in E\}$. Then there is a surjection
  $\theta: \freeL \langle [x_{a_j},x_{b_j}], \; j \in E \rangle \to
  L(\Gamma)$. Over a field of characteristic not equal to $2$, any
  subalgebra of a free graded Lie algebra is a free graded Lie
  algebra~\cite[Theorem 14.5]{mz:book}. So $\theta$ is an
  isomorphism. Let $U$ denote the universal enveloping algebra functor
  from graded Lie algebras, to graded associative algebras. Then $\phi
  \isom U(\theta)$. Thus $\phi$ is also a bijection.
\end{proof}

\begin{thm}[\cite{anick:inert}] \label{thm:anick}
  The following are equivalent:
\begin{enumerate}
\item \label{anick1}
  $H(\DGA(\Gamma)) \isom A(\Gamma)$,
\item \label{anick2} $ \bfk \langle x_1, \ldots, x_n \rangle \isom
  B(\Gamma) \amalg A(\Gamma) \text{ (as vector spaces),} $
\item \label{anick3} $H_{A(\Gamma)}(z) = \dfrac{1}{1-(z^{p_1} + \ldots z^{p_n}) + (z^{q_1} +
    \ldots + z^{q_m})}, $ and
\item \label{anick4} $A(\Gamma)$ has global dimension $\leq 2$.
\end{enumerate}
\end{thm}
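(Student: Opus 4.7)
The plan is to organize the four conditions around (3), which is a pure Hilbert-series identity, and to use Lemma~\ref{lemma:bijection}, the free-product Hilbert-series formula, an Euler-characteristic computation on $\DGA(\Gamma)$, and Anick's characterization of inert sets from \cite{anick:inert}. The crucial input from Lemma~\ref{lemma:bijection} is that $B(\Gamma)$ is free on the commutators $[x_{a_j},x_{b_j}]$, which have degrees $q_j=p_{a_j}+p_{b_j}$, so $H_{B(\Gamma)}(z)=1/(1-\sum_j z^{q_j})$.

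First I would prove $(2)\Rightarrow(3)$ using the standard identity $H_{C\amalg D}(z)^{-1}=H_C(z)^{-1}+H_D(z)^{-1}-1$ for the Hilbert series of a free product of connected graded algebras. Applied to $\bfk\langle x_1,\dots,x_n\rangle\cong B(\Gamma)\amalg A(\Gamma)$, and combined with $H_{\bfk\langle x_1,\dots,x_n\rangle}(z)^{-1}=1-\sum z^{p_i}$, this rearranges directly to (3). Next I would prove $(1)\Rightarrow(3)$ by placing $\DGA(\Gamma)$ in a bigrading whose internal degree is the total degree minus the $y$-count and whose homological degree is the $y$-count; under this bigrading $d$ preserves internal degree and lowers homological degree by one, so internal-degree-by-internal-degree Euler characteristics are homotopy invariants. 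The bigraded Hilbert series of the underlying tensor algebra is $1/(1-\sum z^{p_i}-w\sum z^{q_j})$, so evaluating at $w=-1$ gives $\sum_k(-1)^k H_{H^{(k)}(\DGA(\Gamma))}(z)=1/(1-\sum z^{p_i}+\sum z^{q_j})$. Because in homological degree zero all chains are cycles and the boundaries form $I(\Gamma)$, we always have $H^{(0)}(\DGA(\Gamma))=A(\Gamma)$; hypothesis (1) kills the higher $H^{(k)}$ and collapses the alternating sum to $H_{A(\Gamma)}(z)$, yielding (3).

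For the remaining implications $(3)\Rightarrow(1)$, $(3)\Rightarrow(2)$, and $(3)\Leftrightarrow(4)$ I would appeal to the main structural theorem of \cite{anick:inert}: the coefficient-wise lower bound $H_{A(\Gamma)}(z)\geq[1-\sum z^{p_i}+\sum z^{q_j}]^{-1}$ always holds, with equality precisely when the relation set is strongly free (inert); inertness is in turn equivalent to exactness of the truncated resolution $A(\Gamma)\otimes R\to A(\Gamma)\otimes V\to A(\Gamma)\to\bfk\to 0$ (giving (4)), to the vanishing of $H^{(k)}(\DGA(\Gamma))$ for $k\geq 1$ (giving (1)), and to the vector-space splitting in (2). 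The main obstacle is precisely this structural step---the implication that Hilbert-series equality forces inertness---which is the technical heart of Anick's paper and is not elementary; in the context of this background section the appropriate strategy is to quote \cite{anick:inert} directly rather than reprove it.
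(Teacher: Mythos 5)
Your proposal is correct and agrees with the paper's proof on the essential input --- Lemma~\ref{lemma:bijection} establishes that $B(\Gamma)$ is free on the commutators, and the nontrivial equivalences are then deferred to \cite{anick:inert}. The organizational route differs: the paper pivots every condition through condition~\eqref{anick2}, citing Anick's Theorems 2.6, 2.9, and 2.12(b) directly for the three equivalences $\eqref{anick2}\Leftrightarrow\eqref{anick3}$, $\eqref{anick2}\Leftrightarrow\eqref{anick1}$, $\eqref{anick2}\Leftrightarrow\eqref{anick4}$, whereas you pivot through condition~\eqref{anick3} and supply explicit elementary arguments for the forward implications $\eqref{anick2}\Rightarrow\eqref{anick3}$ (via the free-product Hilbert-series identity $H_{C\amalg D}^{-1}=H_C^{-1}+H_D^{-1}-1$) and $\eqref{anick1}\Rightarrow\eqref{anick3}$ (via an Euler-characteristic argument on the bigraded $\DGA(\Gamma)$, noting $H^{(0)}\cong A(\Gamma)$ always), while citing Anick only for the converse directions and for~\eqref{anick4}. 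Both approaches are sound and ultimately rest on the same structural theorem of Anick's, which you correctly identify as the nonelementary heart of the matter and rightly decline to reprove; your version buys a bit more self-containment and transparency in the easy directions at the cost of a slightly longer exposition, while the paper's is terser since this is background material. One small point worth keeping in mind in your $\eqref{anick2}\Rightarrow\eqref{anick3}$ step: the isomorphism in~\eqref{anick2} is only as graded vector spaces, but since the Hilbert series of the abstract free product $B(\Gamma)\amalg A(\Gamma)$ depends only on $H_{B(\Gamma)}$ and $H_{A(\Gamma)}$, a graded vector-space isomorphism suffices to equate the two sides' Hilbert series and push through the computation, so the argument does go through.
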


If these equivalent conditions are satisfied we say that the set
$\{[x_{a_j},x_{b_j}] \; , j \in E\}$ is \emph{inert}.  Equivalently,
one says that $I(\Gamma)$ is \emph{inert}.

\begin{proof}
The result follows from combining Lemma~\ref{lemma:bijection} and several results of Anick's. 
By Lemma~\ref{lemma:bijection} and \cite[Theorem 2.6]{anick:inert}, \eqref{anick2} and \eqref{anick3} are equivalent.
By Lemma~\ref{lemma:bijection} and \cite[Theorem 2.9]{anick:inert}, \eqref{anick1} and \eqref{anick2} are equivalent.
By Lemma~\ref{lemma:bijection} and \cite[Theorem 2.12(b)]{anick:inert}, \eqref{anick2} and \eqref{anick4} are equivalent.
\end{proof}

\begin{proof}[Proof of Corollary~\ref{cor:algebra}]
By Theorem~\ref{thm:main}, it is enough to show that
\begin{equation} \label{eqn:notriangles}
  1-(z^{p_1} + \cdots + z^{p_{\abs{V}}}) + (z^{q_1} + \cdots + z^{q_{\abs{E}}}) 
  = c_\Gamma(z)
\end{equation}
if and only if $\Gamma$ does not contain a triangle. The
remainder of Corollary~\ref{cor:algebra} then follows from
Theorem~\ref{thm:anick}.

If there is no triangle in $\Gamma$, then there are no cliques with
more than 2 vertices. So we have the desired equality
\begin{equation*}
  c_{\Gamma}(z) = \sum\limits_{i=0}^{\infty}\sum\limits_{j=0}^{\infty}
  (-1)^i c_{i,j} z^j
  = 1- \sum\limits_{v\in V} z^{\abs{v}}+ \sum\limits_{e\in E} z^{\abs{e}}.
\end{equation*}

On the other hand suppose Equation~\ref{eqn:notriangles} is true.
Then
\begin{equation*}
  \sum\limits_{i=3}^{\infty}\sum\limits_{j=0}^{\infty} (-1)^i c_{i,j} z^j =0
\end{equation*}
So 
\begin{equation}\label{eqn:altsum}
\sum\limits_{i=3}^{\infty} (-1)^i c_{i,j}=0 \;\; \mbox{ for each } j.
\end{equation} 
Assume $\Gamma$ contains a triangle.  Let $j_0$ be the minimal weight
of a triangle in $\Gamma$. Since $j_0$ is minimal and the weights are
positive integers, there cannot be any cliques with more than 3
vertices having weight $j_0$, so $c_{i,j_0}=0$ for $i>3$. But then by
Equation~\ref{eqn:altsum} $c_{3,j_0}=0$.  This contradicts the
existence of a triangle of weight $j_0$.  Therefore, $\Gamma$ does not
have any triangles.
\end{proof}

\section{Noncommutative Gr\"obner bases and Hilbert
  series} \label{section:grobner}

Gr\"obner bases provide a nice generating set for an ideal in both
commutative and noncommutative polynomial rings because they allow us
to compute a normal form for elements in the quotient by the ideal. As
the reader may not be familiar with noncommutative Gr\"obner bases, we
provide a fairly detailed description here. An excellent resource on
noncommutative Gr\"obner bases is the paper by Ufnarovski
\cite{Ufn98}.

Let $R= \bfk \langle x_1, \ldots, x_m\rangle$ be a noncommutative
polynomial ring. To define the Gr\"obner basis of a 2-sided ideal of
$R$ we must first define an ordering on the monomials, or words, of
$R$.

\begin{defn}
  An {\em admissible ordering} on $R=\bfk \langle x_1, \ldots, x_m
  \rangle$ is a relation $\geq$ on the words of $R$ such that
  \begin{enumerate}
    \item $\geq$ is a total ordering on the set of words of $R$,
    \item for all words $f,g,h,k$ in $R$, if $f \geq g$ and $h \geq k$,
      then $fh \geq gk$, 
      and  
    \item every infinite sequence of words $w_1 \geq w_2 \geq w_3 \geq
      \cdots$ eventually stabilizes, i.e. $w_i=w_j$ for all $i,j > i_0$.
  \end{enumerate}
\end{defn}

The familiar lexicographical ordering on a commutative polynomial ring is not
an admissible ordering for noncommutative polynomials since if $a\geq b$
then $a\geq ba\geq b^2a\geq \cdots$. The following degree
lexicographic ordering, however, is admissible.

Suppose the degree of the variable $x_i$ is $d_i$. We say the {\it
  degree} of a word $w=x_{i_1}x_{i_2}\cdots x_{i_r}$ is
$\deg(w)=d_{i_1}+d_{i_2}+ \cdots +d_{i_r}$.

\begin{defn}
  The {\em degree lexicographic ordering}, called {\em DegLex}, is the
  ordering such that for words $w,v$ in $R$, $w>v$ if
  \begin{enumerate}
  \item $\deg(w) > \deg(v)$, or
  \item $\deg(w)=\deg(v)$ and lexicographically $w$ comes before $v$.
  \end{enumerate}
\end{defn}

Notice that one has to specify an order on the variables before using
DegLex order.

Let $f=\sum_i c_i w_i$ be an element of $R$ where $c_i \in
{\bfk}-\{0\}$ and $w_i$ are words in the variables. Given an
admissible ordering $\geq$ on $R$, we call the largest (with respect
to $\geq$) of the $w_i$ the {\em initial term} of $f$ and we denote it
by $in(f)$. For a set $S \subset R$ we will write $\initial(S)$ for
the set of all initial terms of elements of $S$.

\begin{defn}
  Let $I$ be a two-sided ideal of $A$. A subset $G$ of words of $I$ is
  a {\it Gr\"obner basis} of $I$ if the 2-sided ideal generated by
  $\initial(G)$ is equal to the ideal generated by $\initial(I)$.
\end{defn}

So for every $f \in I$, some subword of its initial term
$\initial(f)$ is the initial term of an element of $G$.

A major difference between the commutative and noncommutative cases is
that noncommutative Gr\"obner bases are often infinite. 

To compute Gr\"obner bases, we use the idea of a {\em rewriting rule}
for the noncommutative polynomial $f$ with initial term
$w=\initial(f)$. It is the rule that sends $w$ to $g$ where $f$ is
proportional to $w-g$. We write $w \to_f g$. Note that if the leading
coefficient of $f$ is 1 then $f=w-g$.

For a polynomial $p$ with terms containing $w$ as subwords, applying
the rewriting rule $w \to_f g$ to $p$ means replacing all (successive)
occurrences of $w$ by $g$. If the result of this (successive)
replacement is $q$, then we write $p \to_f q$. Notice that $q$ is
smaller than $p$ in the ordering.

Let $f,g \in R$ and let $u,v$ be their initial terms respectively. A
triple of words $(a,b,c)$ is called a {\em composition} of $f$ and $g$
if $ab=u$ and $bc=v$. If the rewriting rules for $f$ and $g$ are $u
\to_f h$ and $v \to_g k$ then the {\em result of the composition}
$(a,b,c)$ is the difference $ak-hc$. There may be multiple
compositions for the same pair of polynomials. The trivial one (where
$b=1$) always reduces to zero after rewriting.

Let $S$ be a set of polynomials in $R$, and let $f,h \in R$. We say
that $f$ {\em reduces by $S$ to $h$} if $h$ can be obtained from $f$
by applying a sequence of rewriting rules for elements of $S$. We
write $f \Rightarrow_S h$.

\begin{excont} Let $\Gamma$ be the pentagon where each vertex has
  weight $1$.  For $i = 1, \ldots, 4$, let $g_i = [x_i, x_{i+1}]$ and
  let $g_5=[x_5,x_1]$.  Order $\bfk\left<x_1,\ldots x_5\right>$ using
  DegLex order with $x_1>x_3>x_5>x_2>x_4$ and $\deg(x_i)=1$.

  We have $\initial(g_4) = x_5 x_4$ and $\initial(g_5) = x_1 x_5$ and
  rewriting rules $x_5 x_4 \to_{g_4} -x_4 x_5$ and $x_1 x_5 \to_{g_5}
  -x_5 x_1$.  There are two compositions of $g_5$ and $g_4$:
  $(x_1x_5,1,x_5x_4)$ and $(x_1,x_5,x_4)$. The result of the first
  composition is 
  \begin{equation*}
  \begin{split}
    (x_1x_5)(-x_4x_5)-(-x_5x_1)(x_5x_4) &= - x_1 x_5 x_4 x_5 + x_5 x_1
    x_5 x_4\\
    &\to_{g_5} x_5 x_1 x_4 x_5 + x_5 x_1 x_5 x_4\\
    &\to_{g_4} x_5 x_1 x_4 x_5 - x_5 x_1 x_4 x_5 = 0.
  \end{split}
  \end{equation*}
  So the result reduces to zero by the set $S = \{g_1, \ldots g_5 \}$. 

  The result of the second composition is $(x_1)(-x_4x_5)-(-x_5x_1)(x_4)
  = -x_1 x_4 x_5 + x_5 x_1 x_4$, which does not reduce to zero by $S$.
\end{excont}

The reduction process appears to depend on the order of the rewriting
rules, but if the set $S$ is a Gr\"obner basis, then there is a unique
minimal reduction of $f$ by $S$. In fact, the converse is true as
well.
 
\begin{thm}[Bergman's Diamond Lemma, \cite{Ber78}]
  Let $G$ be a self-reduced set, that is, no element of $G$ can be
  further reduced by $G$. $G$ is a Gr\"obner basis if and only if the
  results of all possible compositions of elements of $G$ reduce by
  $G$ to zero.
\end{thm}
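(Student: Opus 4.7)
The plan is to treat $(R, \to_G)$ as an abstract rewriting system and deduce the theorem from Newman's lemma. For the easy direction, suppose $G$ is a Gr\"obner basis and let $(a,b,c)$ be a composition of $f,g \in G$ with $ab = \initial(f)$ and $bc = \initial(g)$. After scaling leading coefficients so that $f = ab - h$ and $g = bc - k$, one computes $fc - ag = ak - hc$, so the composition result lies in the ideal $I$ generated by $G$. A standard induction on the initial term, using that $\initial(I)$ is generated by $\initial(G)$, shows that every element of $I$ reduces to $0$ under $\Rightarrow_G$; in particular $ak - hc \Rightarrow_G 0$.

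For the converse, I would first observe that $\Rightarrow_G$ is a terminating rewriting system: admissibility of the ordering ensures that any rewrite step strictly decreases the initial term, and the descending chain condition on words forbids infinite reduction sequences. By Newman's lemma it therefore suffices to prove \emph{local confluence}: whenever $p \to_G q_1$ and $p \to_G q_2$ are two one-step reductions of the same polynomial, there exists $r$ with $q_1 \Rightarrow_G r$ and $q_2 \Rightarrow_G r$.

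Local confluence is established by case analysis on how the two initial terms $u = \initial(f)$ and $v = \initial(g)$ used in the two rewrites sit inside a common monomial $w$ of $p$: either their occurrences in $w$ are disjoint, in which case the two rewrites commute and $q_1, q_2$ have an obvious common one-step reduction; or one occurrence is nested inside the other, which is forbidden by the assumption that $G$ is self-reduced; or the occurrences overlap, and the overlap is precisely encoded by a composition $(a,b,c)$ in the sense of the definition. In the overlap case the hypothesis $ak - hc \Rightarrow_G 0$, lifted into the surrounding context of $w$ inside $p$, supplies the required common reduction. Newman's lemma then upgrades local confluence to global confluence, yielding unique normal forms; in particular every nonzero $f \in I$ reduces to $0$, which forces $\initial(f)$ to contain some $\initial(g)$ with $g \in G$ as a subword, so $G$ is a Gr\"obner basis.

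The main obstacle is the overlap case of local confluence. Turning a reduction of the isolated difference $ak - hc$ to $0$ into a common reduction of the full polynomials $q_1$ and $q_2$ requires delicate bookkeeping, because rewriting is only $\bfk$-linear rather than multiplicative, and because coefficient cancellation between different monomials of $p$ can drop the initial term of $q_1 - q_2$ below what either summand alone suggests. Bergman's original argument handles this by a careful induction on the monomial order that simultaneously tracks all the monomials of $p$ in which an ambiguity can arise, and this induction is the technical heart of the Diamond Lemma.
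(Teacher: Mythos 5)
The paper does not prove this theorem; it cites Bergman~\cite{Ber78} and moves on, so there is no in-paper argument for you to have matched or diverged from. Your plan --- view $\Rightarrow_G$ as a terminating abstract rewriting system on polynomials and reduce everything to Newman's lemma plus a local-confluence case analysis --- is the standard modern reading of why this is called a ``Diamond Lemma,'' and your easy direction (the identity $fc - ag = ak - hc$ places the composition result in $I$, after which descent on initial terms using $\initial(I) = (\initial(G))$ gives $ak - hc \Rightarrow_G 0$) is correct. But for the converse you have written a program rather than a proof: your closing paragraph concedes outright that ``this induction is the technical heart of the Diamond Lemma'' and leaves it to Bergman.

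The two places the outline is genuinely incomplete are exactly where $\bfk$-linearity breaks the naive abstract-rewriting picture, and they deserve to be named. First, your local-confluence case analysis only treats two one-step reductions whose targeted occurrences lie in a single common monomial $w$ of $p$; the case where the two reductions act on \emph{different} monomials of $p$ is absent. That case looks trivial (``the rewrites commute'') but is not: after the first rewrite, the monomial the second rewrite was supposed to hit may have been created with a canceling coefficient and thus no longer be present in $q_1$, so the ``obvious common one-step reduction'' need not exist and one must argue by a further induction on the order. Second, in the overlap case, promoting $ak - hc \Rightarrow_G 0$ to a common reduction of $q_1$ and $q_2$ requires left- and right-multiplying the intermediate polynomials by the context words and then adding back the untouched tail of $p$, each of which can again trigger cancellation among monomials. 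Bergman sidesteps both traps by not running Newman's lemma on polynomials directly: he shows that the set of reduction-unique elements is a $\bfk$-submodule stable under the reduction endomorphisms and inducts on the monomial order to prove that every monomial is reduction-unique, only then extending by linearity. If you want to keep the Newman framing, that restriction to monomials followed by a linearity argument is precisely the induction you deferred, and without it the proof is not complete.
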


\begin{excont}
  The set $S= \{g_1, \ldots, g_5\}$ in the pentagon case above is not
  a Gr\"obner basis because the result of the composition
  $(x_1,x_5,x_4)$ of $g_4$ and $g_5$ does not reduce by $S$ to
  zero. One can check, however, that
  \[
  G = \{g_1, \ldots, g_5\} \cup \{h_k\}_{k=1}^{\infty} \mbox{ where }
  h_k = x_1x_4^k x_5 + (-1)^k x_5x_1x_4^k
  \]
  is a Gr\"obner basis.
\end{excont}

Bergman's Diamond Lemma implies a noncommutative analogue of
Buchberger's algorithm for finding a Gr\"obner basis, known as Mora's
algorithm \cite{Mor86}:

Let $G_0$ be a self-reduced generating set for the ideal. Create an
ordered list of all the possible compositions of elements of
$G_0$. Work through the list of compositions one at a time, and if one
is found whose result does not reduce by $G_0$ to zero, then append it
to the set of generators and self-reduce to get a new generating set
$G_1$. Adjust the list of compositions accordingly. If there is a
finite Gr\"obner basis, then eventually the list of compositions will
be empty and the final $G_k$ will be the Gr\"obner basis. Otherwise
there will be an infinite number of larger and larger degree
compositions to consider. 

Computer programs such as BERGMAN \cite{Bergman} compute the Gr\"obner
basis up to a fixed degree. In the above example, we see a pattern in
the new generators and hence guess the form of the Gr\"obner basis.

 \bigskip

We recall a well-known, though not obvious, fact about Hilbert series
which is crucial to the proof of Theorem~\ref{thm:main}
\begin{prop} \label{lemma:grobnerY} 
  Let $I$ be a 2-sided homogeneous ideal in $R = \bfk\langle x_1,
  \ldots ,x_m\rangle$. Then $ H_{R/I}(z) = H_{R/ \initial(I)}(z). $
\end{prop}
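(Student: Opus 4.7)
The plan is to show that both $R/I$ and $R/\initial(I)$ have the same set of monomials (``normal words'') as a graded $\bfk$-basis, so their Hilbert series must coincide.

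First, fix the admissible ordering on $R$ that was used to define $\initial$, and let $G$ be a Gr\"obner basis of $I$ (which exists by Mora's algorithm, even if it is infinite). Call a monomial $w\in R$ \emph{normal} if no initial term $\initial(g)$ with $g\in G$ occurs as a subword of $w$; equivalently, $w\notin (\initial(I))$, since $(\initial(I)) = (\initial(G))$ is a monomial ideal and a monomial lies in a monomial ideal if and only if one of the ideal's monomial generators divides it as a subword. Let $N_q$ denote the set of normal monomials of degree $q$; the claim will be that $N_q$ descends to a basis of both $(R/I)_q$ and $(R/\initial(I))_q$.

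Next, I would treat $R/\initial(I)$, which is the easier half. Since $(\initial(I))$ is spanned as a $\bfk$-vector space by all monomials lying in $(\initial(I))$, a complementary basis of $R/\initial(I)$ is given exactly by the normal monomials $N = \bigsqcup_q N_q$. This is immediate from the monomial structure and needs no further argument.

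Then I would handle $R/I$ in two steps. For spanning: given any $f\in R$, apply rewriting rules coming from $G$. Each rewriting step replaces $\initial(g)$ inside a word by a $\bfk$-linear combination of strictly smaller words in the admissible order, so by the well-foundedness condition (3) in the definition of admissible ordering, the process terminates in finitely many steps on each term, producing $f'$ whose monomials are all normal and with $f-f'\in I$. Hence $N$ spans $R/I$. For linear independence: suppose $\sum_i c_i n_i \in I$ with $n_i\in N$, $c_i \in\bfk\setminus\{0\}$, and not all $c_i=0$. Its initial term would be one of the $n_i$, which therefore lies in $\initial(I)$; but $n_i$ is normal, contradicting the equivalence above.

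Finally, since both $R/I$ and $R/\initial(I)$ inherit their grading from $R$ and in each degree $q$ they admit the same finite set $N_q$ as a $\bfk$-basis, we get $\dim_\bfk (R/I)_q = |N_q| = \dim_\bfk (R/\initial(I))_q$ for every $q$, whence the Hilbert series agree. The only nontrivial step is the termination of reduction used in the spanning argument, but this is guaranteed by the admissibility of the order; the subtle point to get right is simply that a Gr\"obner basis need not be finite, so one must phrase spanning and reduction carefully, using that each individual reduction step strictly decreases in the well-founded order.
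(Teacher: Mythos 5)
Your proof is correct, but it takes a genuinely different route than the paper. You invoke the Gr\"obner basis machinery: fix a Gr\"obner basis $G$ (possibly infinite), define normal words as those avoiding every $\initial(g)$ as a subword, and then show the normal words of degree $q$ form a $\bfk$-basis of both $(R/\initial(I))_q$ (immediate, monomial ideal) and $(R/I)_q$ (spanning by termination of reduction, independence because a nonzero combination of normal words in $I$ would have a normal initial term lying in $\initial(I)$). The paper instead avoids Gr\"obner bases entirely for this proposition: it fixes a degree $n$, takes a $\bfk$-basis $f_1,\dots,f_p$ of $I_n$, Gaussian-eliminates so that the initial terms $w_1>\cdots>w_p$ are distinct, and then argues directly that $\{w_1,\dots,w_p\}$ spans $(\initial(I))_n$ by writing any element of $\initial(I)_n$ as a combination of words $u_i\,\initial(t_i)\,v_i = \initial(u_i t_i v_i)$ with $u_i t_i v_i\in I_n$. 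The paper's argument is self-contained and sidesteps termination of reduction; yours is the standard normal-form approach and has the advantage of also producing an explicit common monomial basis of the two quotients. One small point worth tightening in your write-up: termination of the reduction process requires a slightly more careful argument than ``each step strictly decreases'' --- a rewriting step replaces one word by a linear combination of smaller words, so the appropriate well-founded order is on finite multisets of words (or one argues by induction on the leading term); this is standard but not entirely automatic from condition (3) of admissibility. You also implicitly use that a homogeneous ideal admits a homogeneous Gr\"obner basis, so reduction preserves degree; this is true and routine, but deserves a word.
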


\begin{proof}
  We want to show that $\dim_\bfk (R/I)_n = \dim_\bfk (R/\initial(I))_n$
  for all $n \geq 0$. It is equivalent to show that the dimensions of
  $I_n$ and $(\initial(I))_n$ are the same. Suppose $I_n$ has a vector
  space basis $f_1, \ldots , f_p$ of monic polynomials. Let
  $w_i=\initial(f_i)$. If $w_i=w_j$ for some $i\neq j$, then replace
  $f_i$ with $f_i -f_j$.  Thus, we may assume that $w_1 > w_2 > \cdots
  > w_p$. So the $w_1, \ldots , w_p$ are linearly independent.  We
  claim that $\{w_1, \ldots , w_p\}$ is a basis for $(\initial(I))_n$.

  Suppose $h \in (\initial(I))_n$. Then $h = \sum_i r_i u_i
  \initial(t_i) v_i$, where $r_i \in \bfk$, $t_i \in I$ and
  $u_i, v_i$ words in $x_1, \ldots, x_n$. Since $\geq$ is admissible, $u_i
  \initial(t_i) v_i = \initial (u_i t_i v_i)$. Let $s_i = u_i t_i
  v_i$. Since $s_i \in I_n$, it follows that $s_i$ is a linear
  combination of $\{f_1, \ldots, f_p\}$. Therefore $\initial(s_i) =
  w_j$ for some $j \in \{1, \ldots, p\}$. Thus $h$ is a linear
  combination of $\{w_1,\ldots,w_p\}$, and so $\{w_1, \ldots, w_p\}$
  is a basis for $(\initial(I))_n$.
\end{proof}

\section{Partially commutative monoids} \label{section:trace}

A \emph{free partially commutative monoid} is a monoid of the form
\begin{equation*}
  M = \langle x_1, \ldots x_n , \rangle / \simeq_I,
\end{equation*}
where $I$ is a set of pairs in $x_1, \ldots, x_n$ and $\simeq_I$ is
the congruence relation generated by setting $ab=ba$ for all $\{a,b\}
\in I$.  We will let $w$ denote an element of $M$ and $1$ denote the
unit of $M$.

Then $M$ can be represented by a finite simple graph $\Gamma$ whose
vertices correspond to the generators $x_1, \ldots, x_n$ of the
monoid, and whose edges correspond to the elements of $I$.  The monoid
$M$ is also called a trace monoid or a monoid of circuits on a graph.
Mazurkiewicz~\cite{mazurkiewicz} introduced these monoids to the study
concurrent systems, where the generators of $M$ correspond to
processes and $I$ lists processes that are independent. Surveys of the
subject include \cite{diekertMetivier}, \cite{diekertRozenberg} and
\cite{lallement}.

Cartier and Foata~\cite{cartierFoata} studied the combinatorics of
partially commutative monoids and proved a version of
Theorem~\ref{thm:main}, which we will now describe.

Recall from Definition~\ref{def:fps}, that a \emph{formal power
  series} on $M$ is a function from $M$ to $\ZZ$.  For example, for
$S \subset M$, the characteristic function $\chi_S$, given by
\begin{equation} \label{eq:characteristicFunction}
  \chi_S(w) = 
  \begin{cases}1 & \text{if } w \in S\\ 0 & \text{if } w \notin S 
  \end{cases}
\end{equation}
is a formal power series. An important special case is the
characteristic function $\chi_M$ which is the constant function
$1$. The set of all power series on $M$ is a ring denoted by $\ZM$.
The unit of this ring, $\bf1$, is given by $\chi_{\{1\}}$.  An element $f
\in \ZM$ is invertible if and only if $f(1)=\pm 1$.  For $f \in \ZM$,
$f = \sum_{w \in M} f(w) \chi_{\{w\}}$. As is customary, we will write
$f$ as $\sum_{w \in M} f(w) w$.

Say that $Q = \{ x_{i_1}, x_{i_2}, \ldots, x_{i_l}\} \subset \{x_1,
\ldots, x_n\}$ is a \emph{clique} of $M$ if the corresponding vertices
of $\Gamma$ form a clique. Since the elements in $Q$ commute, writing
these elements in any order we obtain a representative for the same
unique word $w_Q$ in $M$. Therefore for each clique $Q$, there is a
unique $[Q] \in \ZM$ given by
\begin{equation*}
  [Q] = \chi_{\{w_Q\}},
\end{equation*}

Let $\mathcal{Q}$ denote the finite set of all cliques of $M$
(including the empty clique).  Define the \emph{clique polynomial} of
$M$ to be the formal power series
\begin{equation} \label{eq:cliqueM}
  \mu_M = \sum_{Q \in \mathcal{Q}} (-1)^{\abs{Q}} [Q].
\end{equation}

\begin{ex}
  Let $M$ be the monoid generated by $a,b,c,d,e$ with relations
  $ab=ba$, $ac=ca$, $bc=cb$, $cd=dc$. Then the following graph
  represents $M$.
\begin{center}
\begin{tikzpicture}
\draw[thick] (-1,0)--(1,0);
\draw[thick] (-1,0)--(0,1);
\draw[thick] (0,1)--(1,0);
\draw[thick] (1,0)--(2,0);
\filldraw[black] (-1,0) circle (2pt) node[anchor=north east]{a}
(0,1) circle (2pt) node[anchor=south east]{b}
(1,0) circle (2pt) node[anchor=north ]{c}
(2,0) circle (2pt) node[anchor=north west]{d}
(2,1) circle (2pt) node[anchor=south west]{e};
\end{tikzpicture}
\end{center}

The clique polynomial of $M$ is
\[
\begin{split}
  \mu_M =& \chi_{\phi}-\chi_{\{a\}} - \chi_{\{b\}} -\chi_{\{c\}} -\chi_{\{d\}} -\chi_{\{e\}} \\
  & +\chi_{\{ab\}}+\chi_{\{ac\}}+\chi_{\{bc\}} +\chi_{\{cd\}}
 -\chi_{\{abc\}}\\
=&1-a-b-c-d-e+ab+ac+bc+cd-abc.
\end{split}
\]
\end{ex}

\begin{thm}[Cartier and
  Foata~\cite{cartierFoata}] \label{thm:cartierFoata} The clique
  polynomial of $M$ is the inverse of the constant power series
  $\chi_{M}$.
\end{thm}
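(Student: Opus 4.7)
The plan is to verify directly that $\mu_M \cdot \chi_M = \mathbf{1}$ in $\ZM$, i.e., that $(\mu_M \cdot \chi_M)(w) = 0$ for every $w \ne 1$ and $(\mu_M \cdot \chi_M)(1) = 1$. Using the Cauchy product, together with the description $\mu_M(u) = (-1)^{|Q|}$ when $u = w_Q$ for some (necessarily unique) clique $Q$ and $\mu_M(u) = 0$ otherwise,
\[
(\mu_M \cdot \chi_M)(w) = \sum_{uv = w} \mu_M(u) = \sum_{Q \in \mathcal{Q}\,:\, w_Q \text{ left-divides } w} (-1)^{|Q|},
\]
where ``$w_Q$ left-divides $w$'' means there exists some (unique, by cancellativity of $M$) $v \in M$ with $w_Q v = w$. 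So the whole question reduces to: which cliques $Q$ give rise to left divisors of $w$?

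For each $w \in M$, let $\mathrm{Init}(w) \subseteq \{x_1, \ldots, x_n\}$ be the set of generators that occur as the first letter of some representative of $w$. The main combinatorial step is the following pair of claims. First, $\mathrm{Init}(w)$ is always a clique of $\Gamma$: if distinct $x_i, x_j$ both lie in $\mathrm{Init}(w)$, writing $w = x_i u = x_j v$ and applying Levi's lemma for free partially commutative monoids (the ``cube condition'') forces $x_i$ and $x_j$ to commute, so $\{x_i, x_j\}$ is an edge. Second, the cliques $Q$ such that $w_Q$ left-divides $w$ are precisely the subsets of $\mathrm{Init}(w)$: the inclusion $(\supseteq)$ is proved by peeling off generators of $Q \subseteq \mathrm{Init}(w)$ one at a time from the left, using that they all commute among themselves to rearrange representatives of $w$; the inclusion $(\subseteq)$ follows because any $x_i \in Q$ is a first letter of $w_Q$, hence of $w$.

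Combining the two claims, the sum collapses to
\[
(\mu_M \cdot \chi_M)(w) = \sum_{Q \subseteq \mathrm{Init}(w)} (-1)^{|Q|},
\]
which by the elementary identity $\sum_{Q \subseteq S}(-1)^{|Q|} = [S = \emptyset]$ equals $1$ when $\mathrm{Init}(w) = \emptyset$ (equivalently $w = 1$) and $0$ otherwise. This is the required identity. Since $\chi_M(1) = 1$, the series $\chi_M$ is invertible in $\ZM$, and so the one-sided equality $\mu_M \cdot \chi_M = \mathbf{1}$ automatically identifies $\mu_M$ with its (two-sided) inverse. I expect the main obstacle to be the careful justification of the two claims about $\mathrm{Init}(w)$: both rely on the confluence of the length-preserving rewriting system defining $M$, and the ``peeling off'' direction in the second claim needs an induction on $|Q|$ to extract $w_Q$ as a left factor of some representative.
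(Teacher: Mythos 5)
Your proof is correct and follows essentially the same route as the paper (which itself reproduces Cartier and Foata's original argument): compute $(\mu_M\cdot\chi_M)(w)$ via the Cauchy product, identify the set $\mathrm{Init}(w)$ of possible first letters (the paper calls it $S$), observe it is a clique whose subsets parametrize the clique left-divisors of $w$, and conclude by the alternating-sum identity over subsets. The only difference is that you flag and justify the two combinatorial claims about $\mathrm{Init}(w)$ (via Levi's lemma and a peeling-off induction) which the paper simply asserts; this is a welcome tightening, not a different proof.
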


We recall Cartier and Foata's elegant proof.

\begin{proof}
  Let $w \in M$. We want to show that $\mu_M \cdot \chi_M =
  \chi_{\{1\}}=\bf1$. That is, $\mu_M \cdot \chi_M (w) = \begin{cases}
    1 & \text{ if }w = 1\\ 0 & \text{otherwise} \end{cases}$.  By the
  definition of the Cauchy product in $\ZM$,
  \begin{equation*}
    \mu_M \cdot \chi_M (w) = 
    \sum_{\substack{uv=w\\u = [Q], Q \in \mathcal{Q}}} (-1)^{\abs{u}}.
  \end{equation*}
  If $w = 1$ then $w$ can be uniquely expanded as $1\cdot 1$, where
  the first $1$ is the word corresponding to the $0$--clique. So
  $\mu_M \cdot \chi_M (1) = 1$.  Assume $w \neq 1$.  Let $S = \{ a \in
  \{x_1, \ldots , x_n\}\ | \ w = a w', \ w' \in M\}$.  Then $S \in
  \mathcal{Q}$. Let $m = \abs{S}$. Therefore,
  \begin{equation*}
    \mu_M \cdot \chi_M (w) = \sum_{T \subset S} (-1)^{\abs{T}} = 
    \sum_{k=0}^m \binom{m}{k} (-1)^k = (1-1)^k = 0. \qedhere
  \end{equation*}
\end{proof}

\section{Proof of the main theorem}
\label{section:proof}

In this section we prove a generalization of Theorem~\ref{thm:main},
in which we assign weights to both the vertices and the edges of the
graph.

Let $\Gamma$ be a finite simple graph with vertices $V$ and edges $E$
in which each vertex $i \in V$ is labeled with a weight $p_i \in \N$
and each edge $j \in E$ is labeled with a weight $q_j \in
\Zmodtwo$. Let $c_{\Gamma}(z)$ be the clique polynomial of $\Gamma$ as
in Definition~\ref{def:cliquePolynomial}.

Let $R = \bfk \langle x_i \;, i \in V \rangle$, with $\abs{x_i} =
p_i$.  For $j \in E$, let $[x_{a_j},x_{b_j}]$ denote $x_{a_j} x_{b_j} -
(-1)^{q_j}x_{b_j} x_{a_j}$.  Let $I(\Gamma)$ be the two--sided ideal in $R$
generated by $\{[x_{a_j},x_{b_j}] \;, j \in E\}$.

\begin{thm} \label{thm:generalized}
The associative (noncommutative) graded algebra
 \[
 A = R / I(\Gamma)
  \]
  has Hilbert series
  \[
  H_A(z) = \left[c_{\Gamma}(z)\right]^{-1}.
  \]
\end{thm}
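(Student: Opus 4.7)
The plan is to combine noncommutative Gr\"obner bases with the Cartier--Foata theorem (Theorem~\ref{thm:cartierFoata}). First, I fix the DegLex order on $R$ with some linear ordering $x_1 > x_2 > \cdots > x_n$ of the generators, and for each edge $j \in E$ I label the endpoints so that $a_j < b_j$. Then the initial term of $[x_{a_j}, x_{b_j}] = x_{a_j} x_{b_j} - (-1)^{q_j} x_{b_j} x_{a_j}$ is $x_{a_j} x_{b_j}$, independent of the sign $(-1)^{q_j}$.

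The crucial first step is to show that $\initial(I(\Gamma))$ does not depend on the signs in the generators. Let $I'(\Gamma)$ be the two-sided ideal of $R$ generated by the ordinary commutators $\{x_{a_j} x_{b_j} - x_{b_j} x_{a_j} : j \in E\}$. I would argue that $\initial(I(\Gamma)) = \initial(I'(\Gamma))$ by running Mora's algorithm on both generating sets in parallel: each composition of two polynomials with matching initial terms produces a polynomial whose leading monomial depends only on the initial terms and the composition triple $(a,b,c)$, not on the lower-order tails or their signs. By induction along the (possibly infinite) algorithm, every element appended to the Gr\"obner basis has the same leading monomial in the two cases. Granting this, Proposition~\ref{lemma:grobnerY} gives $H_{R/I(\Gamma)}(z) = H_{R/I'(\Gamma)}(z)$.

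The second step identifies $R/I'(\Gamma)$ with the monoid algebra $\bfk[M]$, where $M$ is the free partially commutative monoid on $\{x_i\}_{i \in V}$ with commuting pairs given by the edges of $\Gamma$. Since the defining relations $x_{a_j} x_{b_j} = x_{b_j} x_{a_j}$ are homogeneous for the grading $\weight(x_{i_1} \cdots x_{i_r}) = p_{i_1} + \cdots + p_{i_r}$, we have $H_{R/I'(\Gamma)}(z) = \sum_{w \in M} z^{\weight(w)}$. Next, consider the ring homomorphism $\phi : \ZM \to \Zz$ defined by $\phi(x_i) = z^{p_i}$. Under $\phi$, the constant series $\chi_M$ maps to $H_{R/I'(\Gamma)}(z)$, and the clique series $\mu_M$ of Equation~\ref{eq:cliqueM} maps to $\sum_{Q \in \mathcal{Q}} (-1)^{|Q|} z^{\sum_{v \in Q} p_v} = c_\Gamma(z)$, matching Definition~\ref{def:cliquePolynomial}.

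Finally, Theorem~\ref{thm:cartierFoata} gives $\mu_M \cdot \chi_M = \mathbf{1}$ in $\ZM$. Applying $\phi$ and combining with the first step yields $c_\Gamma(z) \cdot H_A(z) = 1$, hence $H_A(z) = [c_\Gamma(z)]^{-1}$. The main obstacle is the first step: verifying rigorously that the initial ideal is insensitive to the signs. One must track Mora's algorithm inductively to ensure that at every iteration the new Gr\"obner basis element has the same leading monomial in the signed and unsigned cases, even when the algorithm does not terminate (as the pentagon example in the paper already illustrates).
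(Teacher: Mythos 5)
Your overall strategy matches the paper's exactly: reduce to the all-signs-trivial case via a Gr\"obner basis argument, identify that quotient with the monoid algebra of a free partially commutative monoid, and apply the Cartier--Foata theorem through a weight homomorphism. The second and third steps are correct and essentially identical to the paper's (your $\phi$ is the paper's $\Weight$ map, which the paper verifies is a ring homomorphism as a separate lemma).

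The gap is in the first step, and you correctly identify it as ``the main obstacle'' but do not resolve it. The claim that ``each composition \dots produces a polynomial whose leading monomial depends only on the initial terms and the composition triple $(a,b,c)$, not on the lower-order tails or their signs'' is not correct as stated. The rewriting rules here only permute letters with signs, so the result $r = ak - hc$ of a composition is a difference of two words that are \emph{equal as monomials}; what can differ between the signed and unsigned cases is whether their coefficients cancel. In the unsigned case you may get $r = 0$ while in the signed case you could in principle get $r = \pm 2w$. If that ever happened, the signed initial ideal would contain the monomial $w$ (as a new Gr\"obner basis element) while the unsigned one would not, and the Hilbert series would differ. So the leading monomial genuinely \emph{can} depend on the signs unless this possibility is ruled out. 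The paper handles this with a parity argument: any zig--zag of elementary rewrites from a word $w$ back to $\pm w$ must use each generator $g_i$ an even number of times (tracked via a $\Zmodtwo$-valued invariant of the interleaving of $x_{a_i}$ and $x_{b_i}$ in $w$), hence the accumulated sign $(-1)^{\sum \alpha_i q_i}$ is always $+1$. This is Lemma~\ref{lemma:noZigZag}, and it is the load-bearing idea that your proposal is missing. Without it, the ``parallel Mora's algorithm'' induction does not close.
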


Theorem~\ref{thm:main} follows from this theorem by taking $q_j =
p_{a_j} p_{b_j} \mod 2$.  We start with the case in which all of the edges
have weight $0$.

\begin{prop}
  The result of Theorem~\ref{thm:generalized} holds if $q_j=0$ for
  all $j \in E$.
\end{prop}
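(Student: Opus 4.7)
The plan is to identify $A$ with the monoid algebra of the partially commutative monoid $M$ associated to $\Gamma$, and then specialize the Cartier--Foata identity $\mu_M \cdot \chi_M = \mathbf{1}$ from Theorem~\ref{thm:cartierFoata} through a weight-grading ring homomorphism.

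First I would observe that when every $q_j$ equals $0$, the defining relations $x_{a_j}x_{b_j} - x_{b_j}x_{a_j}$ of $I(\Gamma)$ are exactly the commutation relations that define the partially commutative monoid $M$ whose graph is $\Gamma$. Both $A = R/I(\Gamma)$ and the monoid algebra $\bfk[M]$ satisfy the same universal property among $\bfk$-algebras, namely each is initial among $\bfk$-algebras equipped with elements $y_i$ satisfying $y_{a_j}y_{b_j} = y_{b_j}y_{a_j}$ for every $j \in E$. Hence the canonical map $\bfk[M] \to A$ is an isomorphism of $\bfk$-algebras. Assigning $x_i$ degree $p_i$ makes both sides graded (this is well-defined on $\bfk[M]$ since every commutation preserves total weight), and one obtains
\[
  H_A(z) = \sum_{w \in M} z^{\weight(w)},
\]
where $\weight(w)$ denotes the total weight of a word $w \in M$.

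Next I would introduce the map $\phi: \ZM \to \Zz$ defined by $\phi(f) = \sum_{w \in M} f(w) z^{\weight(w)}$. It is well-defined because each vertex weight $p_i$ is a positive integer, so only finitely many words in $M$ share any fixed weight. Since $\weight$ is additive under concatenation in $M$, a direct expansion of the Cauchy product shows that $\phi$ is a ring homomorphism. By construction $\phi(\chi_M) = H_A(z)$, and $\phi(\mu_M) = c_\Gamma(z)$, since the cliques of $M$ are exactly the cliques of $\Gamma$ and each representative word $w_Q$ has weight equal to the total vertex weight of $Q$. Applying $\phi$ to $\mu_M \cdot \chi_M = \mathbf{1}$ then yields $c_\Gamma(z) \cdot H_A(z) = 1$, which is the claim.

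The main obstacle is justifying the identification $A \cong \bfk[M]$: one must verify that the two-sided ideal generated by the elements $x_{a_j}x_{b_j} - x_{b_j}x_{a_j}$ introduces no further linear dependencies among words beyond those dictated by the monoid congruence $\simeq_I$. The universal property argument sketched above handles this cleanly; alternatively, one could verify it directly using the noncommutative Gr\"obner basis framework of Section~\ref{section:grobner}, since with a suitable DegLex order the rewriting rules $x_{a_j}x_{b_j} \to x_{b_j}x_{a_j}$ produce normal forms indexed by $M$.
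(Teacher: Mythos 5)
Your proposal is correct and follows essentially the same route as the paper: identify $A$ with the monoid algebra of the free partially commutative monoid $M$, define a weight-grading ring homomorphism from $\ZM$ to $\Zz$, and push the Cartier--Foata identity $\mu_M \cdot \chi_M = \mathbf{1}$ through it. The only stylistic difference is that you justify the identification $A \cong \bfk[M]$ via a universal-property argument, whereas the paper simply asserts that $M$ is a $\bfk$-basis for $A$ compatible with the product; both are standard and your version is, if anything, a little more explicit.
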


\begin{proof}
  Let $M$ be the partially commutative monoid $\langle x_i \; , i \in
  V \rangle / \simeq_E$.  For $w \in M$, $w\neq 1$, let $\weight(w) =
  p_{i_1} + \cdots + p_{i_l}$ where $x_{i_1} \cdots x_{i_l}$ is some
  representative of $w$ and $\weight(1)=0$. This is well--defined,
  since for all $j \in E$, $\weight(x_{a_j} x_{b_j}) = \weight (x_{b_j}
  x_{a_j})$. Then $M$ is a basis for $A$ as a $\bfk$--vector space, and
  furthermore the product in $A$ restricts to the composition product
  on $M$.

Define $\Weight:\ZM \to \Zz$ by 
\begin{equation*}
  \Weight\left(\sum_{w \in M}f(w)w\right) = \sum_{w \in M}f(w)z^{\weight(w)}.
\end{equation*}
For $S \subset M$ let $\chi_S$ denote the characteristic function,
defined in~\eqref{eq:characteristicFunction}.  Then for $\chi_M$, the
identity on $M$,
\begin{equation*}
\begin{split}
  \Weight(\chi_M) &= \sum_{k=0}^{\infty} \alpha_k z^k, \text{ where }
  \alpha_k = \#\{w \ | \ \weight(w) = k\}\\
  &= H_A(z),
\end{split}
\end{equation*}
and
$\Weight(\chi_{\{1\}}) = 1$.

By Lemma~\ref{lem:ringhom} below, $\Weight$ is a ring homomorphism.

Let $\mu_M$ be the clique polynomial for $M$, defined in~\eqref{eq:cliqueM}.
Then
\begin{equation*}
  \begin{split}
    \Weight(\mu_M) &=  \sum_{Q \in \mathcal{Q}} (-1)^{\abs{Q}} z^{\weight([Q])}\\
      &= \sum_{i=0}^{\infty} \sum_{j=0}^{\infty} (-1)^i c_{i,j} z^j\\
      &= c_{\Gamma}(z),
  \end{split}
\end{equation*}
where $c_{i,j}$ equals the number of $i$--cliques in $M$ of weight $j$.

Then, by Theorem~\ref{thm:cartierFoata}, $H_A(z) \cdot c_{\Gamma}(z) =
\Weight(\chi_M) \cdot \Weight(\mu_M) = \Weight(\chi_M \cdot \mu_M) =
\Weight(\chi_{\{1\}}) = 1$.
\end{proof}

\begin{lem}\label{lem:ringhom}
 $\Weight: \ZM \to \Zz$ is a ring homomorphism.
\end{lem}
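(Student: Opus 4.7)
The plan is to verify the three defining properties of a ring homomorphism in turn: additivity, unit preservation, and multiplicativity. Additivity is immediate from the definition, since $\Weight$ is defined by applying $f \mapsto f(w)$ coefficient-wise and then substituting the monomial $z^{\weight(w)}$. Unit preservation amounts to $\Weight(\chi_{\{1\}}) = z^{\weight(1)} = z^0 = 1$, which is built into the definition $\weight(1) = 0$.

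The substantive content is multiplicativity. The key preliminary observation I would record is that $\weight: M \to (\ZZ_{\geq 0},+)$ is a monoid homomorphism: if $u = x_{i_1}\cdots x_{i_k}$ and $v = x_{j_1}\cdots x_{j_l}$ are any representatives, then the concatenation $x_{i_1}\cdots x_{i_k} x_{j_1}\cdots x_{j_l}$ is a representative of $uv$, and by definition $\weight(uv) = p_{i_1}+\cdots+p_{i_k}+p_{j_1}+\cdots+p_{j_l} = \weight(u)+\weight(v)$. (Well--definedness of $\weight$ on $M$, already used in the proof of the proposition, ensures this does not depend on the choice of representatives.)

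Given this, I would compute directly from the definition of the Cauchy product: for $f,g \in \ZM$,
\begin{equation*}
\Weight(f \cdot g) = \sum_{w \in M} (f \cdot g)(w) \, z^{\weight(w)} = \sum_{w \in M} \sum_{uv=w} f(u) g(v) \, z^{\weight(w)}.
\end{equation*}
Reindexing by the pair $(u,v)$ and using $\weight(uv) = \weight(u)+\weight(v)$ yields
\begin{equation*}
\Weight(f \cdot g) = \sum_{(u,v) \in M \times M} f(u) g(v) \, z^{\weight(u)} z^{\weight(v)} = \Weight(f) \cdot \Weight(g),
\end{equation*}
where the last equality is the definition of the Cauchy product in $\Zz$.

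The only potential obstacle is convergence of these formal sums, but this is not a real issue: since every $p_i$ lies in $\N$, only finitely many $w \in M$ have $\weight(w) \leq N$ for any fixed $N$, so each coefficient of $z^N$ in $\Weight(f \cdot g)$ is a finite sum and the reindexing is term-by-term legal in $\Zz$. With that bookkeeping dispatched, the three properties together give the lemma.
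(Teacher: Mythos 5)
Your proof is correct and follows essentially the same route as the paper's: both arguments reduce multiplicativity to the observation that $\weight(uv)=\weight(u)+\weight(v)$ and then reindex the Cauchy product over pairs $(u,v)$. You are slightly more careful than the paper in two small respects---you explicitly record that $\weight$ is a monoid homomorphism and that each coefficient of $z^N$ is a finite sum because all $p_i$ are positive, and you also verify unit preservation, which the paper handles outside the lemma---but these are refinements of the same argument rather than a different one.
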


\begin{proof}
  \begin{multline*}
    \Weight(f + g) = \sum_{w \in M}(f+g)(w)z^{\weight(w)} = \\
    = \sum_{w\in M}f(w)z^{\weight(w)} + \sum_{w\in
      M}g(w)z^{\weight(w)} = \Weight(f) + \Weight(g).
  \end{multline*}
  Let $M_k$ denote the subset of $M$ of words with weight $k$.
  \begin{gather*}
    \Weight(f\cdot g) = \sum_{k=0}^{\infty} c_k z^k, \text{ where }
    c_k = \sum_{w \in M_k} \sum_{uv=w} f(u)g(v)\\
    \Weight(f)\Weight(g) = \sum_{k=0}^{\infty} d_k z^k, \text{ where }
    d_k = \sum_{i+j=k} \sum_{u \in M_i} f(u) \sum_{v \in M_j} g(v).
  \end{gather*}
  That $c_k = d_k$ follows from remarking that 
  \begin{equation*}
    \#\{(u,v) \in M \times M \ | \ uv \in M_k\} = 
    \# \{ (u,v) \in M_i \times M_j \ | \ i+j=k\}. \qedhere
  \end{equation*}
\end{proof}

To prove Theorem~\ref{thm:generalized} it remains to show that the
choice of $q_j \in \Zmodtwo$ does not affect $H_A(z)$. We will use
noncommutative Gr\"obner bases to prove this.

Let $\geq$ be an admissible ordering on $R=\bfk\left<x_1, \ldots,
x_{\abs{V}}\right>$.  Let $G$ be a Gr\"obner basis for $I(\Gamma)$.  Since
$H_{R/I}(z) = H_{R/\initial(I)}(z)$ and $R/ \initial(I) = R / (\initial(G))$,
it remains to show that one can choose a Gr\"obner basis for $I(\Gamma)$
such that $\initial(G)$ does not depend on the $q_j$.

For simplicity, enumerate the list $\{[x_{a_j}, x_{b_j}], \; {j \in E}\}$
by $\{g_1, \ldots, g_m\}$.  Say that a rewriting rule $u \to_f v$ is a
\emph{zig--zag of elementary rewrites} if it can be written as
a sequence of rewrites
\begin{equation*}
  u \to_{g_{i_1}} u_1\text{ }_{g_{i_2}}\!\!\!\from u_2 \to_{g_{i_3}} 
  \ldots \text{ }_{g_{i_{n-1}}}\!\!\!\!\!\!\!\from u_{n-1} \to_{g_{i_n}} v.
\end{equation*}

\begin{lem}
  Any zig--zag of elementary rewrites from a word $w$
  to $\pm w$, has an even number of $g_i$ for each $i$.
\end{lem}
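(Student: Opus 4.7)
The plan is to build, for each edge $i \in E$, a $\ZZ$-valued invariant on words that is sensitive to rewrites by $g_i$ but insensitive to rewrites by $g_k$ for $k \neq i$. Specifically, I would define $N_i(w)$ to be the number of ordered position pairs $(p,q)$ with $p < q$ such that the letter in position $p$ of $w$ is $x_{a_i}$ and the letter in position $q$ is $x_{b_i}$. Since the overall sign in $\pm w$ is a scalar and does not alter the underlying word, $N_i(\pm w) = N_i(w)$; therefore along any zig--zag from $w$ to $\pm w$ the net change in $N_i$ must be zero.

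Next I would check how a single elementary rewrite affects $N_i$. The rule associated to $g_i = [x_{a_i},x_{b_i}]$ is, up to sign, the replacement of an adjacent occurrence of $x_{a_i}x_{b_i}$ by $x_{b_i}x_{a_i}$ (or the reverse, depending on $\initial(g_i)$). In either direction this flips the relative order of exactly one occurrence-pair $(x_{a_i},x_{b_i})$ and leaves every other pair of positions fixed, so $N_i$ changes by $\pm 1$. For $k \neq i$, the rule associated to $g_k$ swaps two adjacent letters $x_{a_k},x_{b_k}$; since $\Gamma$ is a simple graph, $\{a_k,b_k\} \neq \{a_i,b_i\}$, so at least one of the two swapped letters lies outside $\{x_{a_i},x_{b_i}\}$. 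A short case check on $|\{a_k,b_k\} \cap \{a_i,b_i\}| \in \{0,1\}$ then shows that the swap can only reorder an $x_{a_i}$ (or $x_{b_i}$) with some letter different from its partner, hence the relative order of every $x_{a_i}$-occurrence with every $x_{b_i}$-occurrence is preserved, so $N_i$ is unchanged.

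Combining the two observations, along any zig--zag $w \to \cdots \to \pm w$ each step using $g_i$ contributes $\pm 1$ to the net change of $N_i$ and every other step contributes $0$. Since the total change must vanish, the number of $g_i$-steps is even, which is what we wanted to prove. The argument is a pure invariant argument, so there is no real obstacle; the one point that requires a moment's thought is the verification that a $g_k$-swap with $k \neq i$ never transposes an $x_{a_i}$ past an $x_{b_i}$, and this reduces immediately to the fact that $\Gamma$ has no multiple edges.
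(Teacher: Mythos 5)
Your argument is correct and follows the same high-level strategy as the paper's: build, for each $i$, a numerical invariant of monomials that is changed by every $g_i$-rewrite but fixed by every $g_j$-rewrite with $j \neq i$, then use $N_i(\pm w) = N_i(w)$ to force an even count. The only real difference is the choice of invariant. You take the $\ZZ$-valued inversion count $N_i(w)$, the number of position pairs $(p,q)$ with $p<q$ carrying $(x_{a_i},x_{b_i})$; a $g_i$-step shifts it by $\pm 1$ and all other steps fix it, so a vanishing net change forces the number of $g_i$-steps to be even. The paper instead restricts $w$ to the two-letter subalphabet $\{x_{a_i},x_{b_i}\}$, obtaining a subword $\beta_1\cdots\beta_t$, and uses the $\Zmodtwo$-valued quantity $P(w)=\#\{k : \beta_{2k}=x_{a_i}\}\bmod 2$, the parity of the number of $x_{a_i}$'s in even positions of that subword; a $g_i$-rewrite swaps two adjacent letters of the subword and so flips $P$, while other rewrites leave the subword unchanged. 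The crucial lemma underlying both verifications is identical — a $g_j$-swap with $j\ne i$ never transposes an $x_{a_i}$ past an $x_{b_i}$, because $\Gamma$ is simple — and your case analysis on $\lvert\{a_k,b_k\}\cap\{a_i,b_i\}\rvert$ is exactly the right way to see it. Your inversion count is perhaps the more transparent choice, while the paper's parity invariant keeps everything mod $2$ from the start, sidestepping the small final observation that $\pm 1$ contributions summing to zero must come in equal numbers.
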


\begin{proof}
  Fix $i$ and $\Phi$, a zig--zag of elementary rewrites from $w$ to
  $\pm w$. Assume $x_{a_i}x_{b_i} \to_{g_i} (-1)^{q_i}x_{b_i}x_{a_i}$.
  Assume $w = \alpha_1 \cdots \alpha_s$, $\alpha_l \in \{x_1, \ldots,
  x_n\}$.  Let $\beta_1 \cdots \beta_t$ be the word obtained from $w$ by
  deleting all letters except $x_{a_i}$ and $x_{b_i}$.  Set $P(w) = \#
  \{k \ | \ \beta_{2k} = x_{a_i}\} \mod 2$.  Then rewriting using $g_j$
  changes $P$ if and only if $i=j$.  Therefore $\Phi$ contains an even
  number of $g_i$.
\end{proof}

\begin{lem} \label{lemma:noZigZag}
  There does not exist a zig--zag of elementary rewrites from $w$ to $-w$.
\end{lem}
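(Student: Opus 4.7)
The plan is to track the scalar factor that accumulates as one walks along a zig-zag of elementary rewrites. Since each elementary rewrite takes a signed monomial to another signed monomial, each intermediate term in a zig-zag starting at $w$ can be written as $u_k = c_k v_k$ with $c_k \in \bfk$ and $v_k$ a monic word. I would verify that, regardless of direction, each elementary rewrite multiplies $c_k$ by $(-1)^{q_{i_k}}$: for a forward step $u_{k-1}\to_{g_{i_k}}u_k$ this is the definition of the rewriting rule $x_{a_j}x_{b_j}\to_{g_j}(-1)^{q_j}x_{b_j}x_{a_j}$; for a backward step one inverts the same rule, and since $q_j \in \Zmodtwo$ the factor $(-1)^{q_j}$ is its own inverse, so the same sign $(-1)^{q_{i_k}}$ appears.

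Suppose for contradiction that there is a zig-zag from $w$ to $-w$. Taking $u_0 = w$ (so $c_0 = 1$) and $u_n = -w$ (so $c_n = -1$), the accumulation above yields
\[
-1 \;=\; \prod_{k=1}^{n} (-1)^{q_{i_k}} \;=\; \prod_{i\in E} (-1)^{q_i n_i},
\]
where $n_i$ counts how many times the rule $g_i$ is applied along the zig-zag. By the preceding lemma each $n_i$ is even, so every factor on the right equals $+1$, giving $-1 = +1$, a contradiction. Hence no such zig-zag exists.

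The entire argument is essentially immediate given the preceding lemma. The only point requiring a moment's care is checking that a backward elementary rewrite contributes the same sign $(-1)^{q_j}$ as a forward one, which is precisely where the hypothesis $q_j \in \Zmodtwo$ enters; the rest is just bookkeeping of signs.
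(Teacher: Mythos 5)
Your argument is correct and is essentially the same as the paper's: track the sign accumulated along the zig-zag, note that each use of $g_i$ contributes a factor $(-1)^{q_i}$ in either direction, and invoke the preceding lemma to conclude the exponent $\sum_i n_i q_i$ is even. You are slightly more explicit than the paper in checking that backward rewrites contribute the same sign factor, which is a reasonable detail to spell out.
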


\begin{proof}
  Let $\Phi$ be a zig--zag of elementary rewrites from $w$ to $\pm
  w$. Let $\alpha_i$ be the number of $g_i$ in $\Phi$. Then let $\alpha =
  \sum_{i=1}^{\abs{V}} \alpha_i q_i$. Then $\Phi$ goes from $w$ to $(-1)^\alpha
  w$. By the previous lemma, $\alpha_i$ is even for all $i$, so $(-1)^\alpha =
  1$.
\end{proof}

The following proposition completes the proof of
Theorem~\ref{thm:generalized}.

\begin{prop}
  The two--sided ideal $I(\Gamma)$ has a Gr\"obner basis $G$ such that
  \begin{enumerate}
  \item all of the elements of $G$ have rewriting rules which are zig--zags
    of elementary rewrites, and
  \item $\initial(G)$ does not depend on the weights $\{q_k \; , k \in E\}$.
  \end{enumerate}
\end{prop}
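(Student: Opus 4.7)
The plan is to run Mora's algorithm starting from $G_0 = \{g_1, \ldots, g_m\}$ and prove by induction on the stage of the algorithm that throughout the run we can maintain the following invariant: every element $f$ of the current generating set $G_k$ has the form $f = u - \sigma v$, where $u$ and $v$ are words with $u = \initial(f)$, $\sigma \in \{\pm 1\}$, the rewriting rule $u \to \sigma v$ is a zig--zag of elementary rewrites, and the ordered pair $(u, v)$ depends only on the graph $\Gamma$, not on the weights $q_j$. The base case is immediate from $g_j = x_{a_j}x_{b_j} - (-1)^{q_j} x_{b_j}x_{a_j}$.

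For the inductive step, I fix once and for all a deterministic reduction strategy (for instance, always rewrite the leftmost reducible subword, breaking ties using the smallest-index applicable generator) that depends only on the initial terms of $G_k$. Consider a composition $(a, b, c)$ of generators $f_i = u_i - \sigma_i v_i$ and $f_j = u_j - \sigma_j v_j$ with $ab = u_i$ and $bc = u_j$; its result is $\sigma_j a v_j - \sigma_i v_i c$. Reducing each monomial by this strategy, using the zig--zag rules of $G_k$ supplied by the inductive hypothesis, yields $\sigma_j a v_j \Rightarrow \sigma_j \epsilon_1 p_1$ and $\sigma_i v_i c \Rightarrow \sigma_i \epsilon_2 p_2$ for some words $p_1, p_2$ and signs $\epsilon_1, \epsilon_2 \in \{\pm 1\}$.

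Concatenating the zig--zag $abc \to_{f_j} \sigma_j a v_j \Rightarrow \sigma_j \epsilon_1 p_1$ with the reverse of $abc \to_{f_i} \sigma_i v_i c \Rightarrow \sigma_i \epsilon_2 p_2$ then produces a zig--zag of elementary rewrites between $\sigma_i \epsilon_2 p_2$ and $\sigma_j \epsilon_1 p_1$. If $p_1 = p_2$, this is equivalent to a zig--zag from $p_1$ to $(\sigma_i \sigma_j \epsilon_1 \epsilon_2)\, p_1$, so Lemma~\ref{lemma:noZigZag} forces the sign to be $+1$ and the composition result reduces to $0$. If $p_1 \neq p_2$, assume without loss of generality $p_1 > p_2$; we add the new generator $p_1 - \sigma p_2$ with $\sigma = \sigma_i \sigma_j \epsilon_1 \epsilon_2$ to the basis. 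The words $p_1, p_2$ were produced by the fixed strategy applied to $G_k$'s initial terms (hence depend only on $\Gamma$ by induction), and the zig--zag just constructed realizes the rewriting rule $p_1 \to \sigma p_2$. Subsequent self-reduction only substitutes subwords in other generators' defining words and preserves the invariant.

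Iterating produces the required (possibly infinite) Gröbner basis $G$ satisfying both conclusions of the proposition. The technical heart of the argument is Lemma~\ref{lemma:noZigZag}: it precisely rules out the sign-conflict scenario that could otherwise force $\initial(G)$ to depend on the $q_j$. The only remaining obstacle is the bookkeeping of choosing a reduction strategy that is itself $q_j$--independent, which any rule phrased purely in terms of initial terms and word positions achieves.
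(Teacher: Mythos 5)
Your proof is correct and follows essentially the same strategy as the paper's: induction along Mora's algorithm starting from $G_0=\{g_1,\ldots,g_m\}$, maintaining the invariant that every generator is a binomial $u-\sigma v$ whose rewriting rule is a zig--zag of elementary rewrites, and using Lemma~\ref{lemma:noZigZag} as the decisive ingredient that rules out a sign conflict (hence both a ``monomial in the ideal'' pathology and any $q_j$--dependence of whether a composition reduces to zero).

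The one place where you go further than the paper is in pinning down a deterministic reduction strategy phrased purely in terms of initial terms and word positions, so that the words $p_1,p_2$ obtained from reducing a composition result are manifestly $q_j$--independent and only their accompanying signs can vary. The paper achieves the same effect more informally, observing after the fact that changing $\{q_k\}$ ``only changes the signs of the terms in the zig--zag'' and again invoking Lemma~\ref{lemma:noZigZag}; your explicit strategy is a tidy way of making that observation structural rather than post hoc. Both approaches prove the same thing, and the accounting in your sign calculation ($\sigma=\sigma_i\sigma_j\epsilon_1\epsilon_2$, leading term $\max(p_1,p_2)$) checks out.
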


\begin{proof}
  The proof is by induction on the sets $G_i$ arising during the
  application of Mora's algorithm to find the Gr\"obner basis of
  $I(\Gamma)$. Recall that at each step where we obtain a nonzero
  result $r_i$, $G_{i+1}$ is the self reduction of $G_i \cup {r_i}$.

  We start with $G_0 = \{g_1, \ldots, g_m\}$, all of whose elements
  have elementary rewriting rules and whose initial terms do not
  depend on the weights $\{q_k \; , k \in E\}$.

  Suppose (1) and (2) of the Proposition are true for $G_i$ and that
  $f,g \in G_i$. So we have two rewriting rules $ab \to_f h$ and $bc
  \to_g k$ both of which are zig--zags of elementary rewriting rules.
  Then $f$ and $g$ have composition $(a,b,c)$ which has result $r =
  ak-hc$. If $r\neq 0$, then we have either $ak \to_r hc$ or $hc \to_r
  ak$. In either case this is a zig--zag of $f$ and $g$,
  \begin{equation*}
    \xymatrix{& abc \ar[dl]_g \ar[dr]^f\\ ak & & hc}
  \end{equation*}
  which, by induction, is a zig--zag of elementary
  rewrites. 

  If $r \neq 0$ then we add $r$ to the Gr\"obner basis. Self-reducing
  $G_i \cup {r}$ may change the elements, but they will still have
  rewriting rules that are zig-zags of elementary rewrites so all of
  the elements of $G_{i+1}$ will have rewriting rules which are
  zig-zags of elementary rewrites. We remark that $\initial(r)$ does
  not depend on $\{q_k \; ,k \in E\}$. 

  It remains to show that it cannot be that $r=0$ for one choice of
  $\{q_k \; , k \in E\}$, and $r \neq 0$ for another choice. Fix a
  choice of $\{q_k \; , k \in E\}$, and assume that $r=ak-hc=0$. This implies
  that there are sequences of elementary rewrites from both $ak$ and
  $hc$ to the same word $u$. 
  \begin{equation*}
    \xymatrix{& abc \ar[dl]_g \ar[dr]^f\\ ak \ar[dr] & & hc \ar[dl]\\ & u}
  \end{equation*}
  Composing, we have a zig--zag, $\Phi$, of elementary rewrites from
  $u$ to $u$.  Notice that for any zig--zag of elementary rewrites,
  changing the choice of $\{q_k \; , k \in E\}$ only changes the signs of
  the terms in the zig--zag.  By Lemma~\ref{lemma:noZigZag}, the
  resulting zig--zag is still from $u$ to $u$. That is, we still have
  $r=0$.
\end{proof}

\begin{excont}
  When $\Gamma$ is the pentagon, the clique polynomial is
  $1-5z+5z^2$. So $H_{A(\Gamma)}(z) = 1/(1-5z+5z^2)$.  Furthermore by
  Corollary~\ref{cor:algebra}, $I(\Gamma)$ is inert, as is the
  attaching map in $(S^{p_i+1},*)^{\Gamma}$, and $A$ has global
  dimension $2$.
\end{excont}

\section{Graph products of spheres and Adams--Hilton
  models} \label{section:topology}

Given a simply--connected CW complex, $Y,$ a useful algebraic model is
the Adams--Hilton model~\cite{adamsHilton}. It is a free differential
graded algebra (DGA), $\AH(Y)$, whose algebra generators are in 1--1
correspondence with the cells of $Y$. Furthermore, there is a morphism
of DGAs from $\AH(Y)$ to the singular chain complex on the Moore loops
on $Y,$ that induces an isomorphism $H\AH(Y) \isom H_*(\Omega Y;
\bfk)$, where $\Omega Y$ denotes the space of pointed loops on $Y$.
For a nice summary of the properties of Adams--Hilton models
see~\cite[Theorem 11.10.7]{selick:book}.

Let $\underline{X}^{\Gamma}$ be the generalized moment--angle complex
defined in Section~\ref{section:introduction} before the statement of
Theorem~\ref{thm:topology}. Then $\underline{X}^{\Gamma}$ has an
Adams--Hilton model~\cite{adamsHilton} given by the differential
graded algebra 
\begin{equation*}
  \AH(\underline{X}^{\Gamma}) 
  = (\bfk \langle x_1, \ldots, x_n, y_1, \ldots, y_m \rangle, d), 
\end{equation*}
\begin{equation*}
  \text{ where } \abs{x_i} = p_i, \; \abs{y_j} = q_j+1, \text{ and }
  dy_j = [x_{a_j}, x_{b_j}].
\end{equation*}
We remark that $\AH(\underline{X}^{\Gamma})$ is a sub--DGA of
$\AH(\underline{X}^K)$ for any simplicial complex $K$ containing
$\Gamma$. 

If we give $\AH(\underline{X}^{\Gamma})$ a grading by setting each
$x_i$ to have degree $0$ and each $y_j$ to have degree $1$, then the
degree $0$ component of the homology $H\AH(\underline{X}^{\Gamma})$ is
\begin{equation*}
  A = \bfk \langle x_1, \ldots, x_n \rangle / I \text{, where } 
  I = ([x_{a_j}, x_{b_j}] \; , j=1 \ldots m).
\end{equation*}

\begin{proof}[Proof of Theorem~\ref{thm:topology}]
  Label the vertices of $\Gamma$ with $\{p_i\}$.  Then using notation
  from Section~\ref{section:algFromGraphs},
  $\AH(\underline{X}^{\Gamma}) = \DGA(\Gamma)$, $I=I(\Gamma)$ and $A =
  A(\Gamma)$.  Furthermore, using the notation of
  Theorem~\ref{thm:topology}, $H_*(\Omega Y;\bfk) \isom A$,
  $H_*(\Omega W;\bfk) \isom \bfk \langle x_1, \ldots, x_n \rangle$,
  and $I(f) = I$.  We have a cofibration $Z \xrightarrow{f} W
  \xrightarrow{i} Y$.

  Statement (3) from Theorem~\ref{thm:topology} can be rewritten as
  $H_*(\Omega Y;\bfk) \isom H_*(\Omega W; \bfk) / I(f)$. By
  Theorem~\ref{thm:anick}, statements (3), (4), (5), and (6) of
  Theorem~\ref{thm:topology} are equivalent.  Statements (1) and (6)
  of Theorem~\ref{thm:topology} are equivalent by
  Theorem~\ref{thm:main}. Equivalence with (2) is provided by F\'elix
  and Thomas~\cite[Theorem 1]{felixThomas:attach}.
\end{proof}

\bigskip

\noindent
\textbf{Acknowledgments.}  The authors would like to thank Yves
F\'elix for introducing them to the generalized moment--angle
complex. The genesis of this paper was a series of excellent
discussions with Yves F\'elix, Greg Lupton and John Oprea. We would
also like to thank Yves F\'elix, Greg Lupton, John Oprea and Jonathan
Scott for their detailed comments on an earlier draft.



\end{document}